\newcommand{\height}{\operatorname{ht}}
\newcommand{\length}{\operatorname{\lambda}}
\newcommand{\rank}{\operatorname {rank}}
\newcommand{\Spec}[1]{\operatorname {Spec(#1)}}
\newcommand{\mf}[1]{\mathfrak #1}
\DeclareMathOperator{\eh}{e}
\DeclareMathOperator{\ehk}{e_{HK}}
\DeclareMathOperator{\Hom}{Hom}
\DeclareMathOperator{\fsig}{s}
\DeclareMathOperator{\srk}{rank}
\DeclareMathOperator{\depth}{depth}
\renewcommand{\frq}[1]{{#1}^{[p^e]}}
\renewcommand{\hat}{\widehat}
\newcommand{\fa}{\mathfrak a}
\newcommand{\fm}{\mathfrak m}
\newcommand{\NN}{\mathbb N}
\newcommand{\N}{\mathbb N}
\newtheorem{theorem}{Theorem}
\newtheorem{lemma}[theorem]{Lemma}
\newtheorem{claim}[theorem]{Claim}
\newtheorem{corollary}[theorem]{Corollary}
\theoremstyle{definition}
\newtheorem{definition}[theorem]{Definition}
\newtheorem{Theoremx}{Theorem}
\theoremstyle{remark}
\newtheorem{remark}[theorem]{Remark}
\newtheorem{example}[theorem]{Example}
\newtheorem{notation}[theorem]{Notation}
\numberwithin{theorem}{section}
\numberwithin{equation}{section}
\begin{document}

\title[Equimultiplicity theory of strongly $F$-regular rings]
{EQuimultiplicity theory of strongly $F$-regular rings}

\author{Thomas Polstra}
\thanks{Polstra was supported in part by NSF Postdoctoral Research Fellowship DMS $\#1703856$.}
\address{Department of Mathematics\\ University of Utah\\ Salt Lake City\\ UT 84112\\USA}
\email{polstra@math.utah.edu}

\author{Ilya Smirnov}
\address{Department of Mathematics\\
Stockholm University\\Stockholm,
SE - 106 91, Sweden
}
\email{smirnov@math.su.se}

\begin{abstract} We explore the equimultiplicity theory of the $F$-invariants Hilbert--Kunz multiplicity, $F$-signature, Frobenius Betti numbers, and Frobenius Euler characteristic over strongly $F$-regular rings. Techniques introduced in this article provide a unified approach to the study of localization of these invariants and detection of singularities.
\end{abstract}

\maketitle

\section{Introduction}

The most intrinsic feature of a ring $R$ of prime characteristic $p>0$ is the Frobenius endomorphism given by taking the $p$-powers, $x \mapsto x^p$. Let $F^e_*R$ be the $R$-module obtained by restricting scalars along the $e$th Frobenius endomorphism. For sake of simplicity assume that $(R,\mf m,k)$ is local and $F$-finite, meaning $R$ is a local ring and $F^e_*R$ is a finitely generated $R$-module for each $e\in \mathbb{N}$.
At the root of prime characteristic commutative algebra and algebraic geometry is Kunz's fundamental result characterizing flatness of the Frobenius endomorphism.

\begin{theorem}[\cite{Kunz1}]\label{theorem Kunz}
Let $(R, \mf m,k)$ be an $F$-finite local ring of prime characteristic $p > 0$. Then $R$ is regular if and only if $F^e_* R$ is a free $R$-module for some (equivalently, all) $e\in \N$.
\end{theorem}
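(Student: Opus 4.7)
My plan is to reduce to the complete case via faithful flatness of $R \to \hat R$ and then use Cohen's structure theorem. Since $\hat R$ inherits $F$-finiteness and, as finitely generated $R$-modules, $F^e_*R \otimes_R \hat R \cong F^e_*\hat R$ (the $\mathfrak m$-adic and $\mathfrak m^{[p^e]}$-adic topologies on $F^e_*R$ agree), and since freeness of a finitely generated module descends along faithful flatness, it suffices to prove freeness over $\hat R$. By Cohen's structure theorem, $\hat R \cong K[\![x_1,\ldots,x_d]\!]$ for some $F$-finite coefficient field $K$. Fixing a $K^{p^e}$-basis $\{\lambda_j\}$ of $K$ (finite by $F$-finiteness), the monomials $\{\lambda_j x_1^{a_1}\cdots x_d^{a_d} : 0 \le a_i < p^e\}$ will form a basis of $\hat R$ over $\hat R^{p^e}$, i.e.\ a free basis of $F^e_*\hat R$ over $\hat R$.

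\textbf{Backward direction ($F^e_*R$ free $\Rightarrow R$ regular).} The plan is to equate two standard computations for the free module $F^e_*R$: its generic rank and its minimal number of generators. First I would reduce to the domain case: after replacing $e$ by a large multiple (freeness is preserved under iterating the Frobenius), any nonzero nilpotent of $R$ annihilates every basis element through the twisted action, forcing $R$ reduced; then freeness with constant rank narrows things to the irreducible situation. Using that $\mathfrak m\cdot F^e_*R = F^e_*\mathfrak m^{[p^e]}$, so that $F^e_*R/\mathfrak m F^e_*R = F^e_*(R/\mathfrak m^{[p^e]})$, and that the twisted $k$-action on the artinian ring $R/\mathfrak m^{[p^e]}$ multiplies $k$-vector space dimension by $[k:k^{p^e}]$, I expect the chain
$$[k:k^{p^e}]\cdot p^{e\dim R} = \operatorname{rank}_R F^e_*R = \mu_R(F^e_*R) = [k:k^{p^e}]\cdot \ell_R(R/\mathfrak m^{[p^e]}),$$
where the first equality passes through the fraction field of $R$. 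This collapses to the length identity $\ell_R(R/\mathfrak m^{[p^e]}) = p^{ed}$, $d = \dim R$.

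\textbf{The main obstacle.} The remaining task is to convert the length identity $\ell_R(R/\mathfrak m^{[p^e]}) = p^{ed}$ into regularity, and this is where the real work lies. I would invoke Hilbert--Samuel multiplicity: because $\mathfrak m^{[p^e]}$ is $\mathfrak m$-primary,
$$\ell_R(R/\mathfrak m^{[p^e]}) \ge e(\mathfrak m^{[p^e]},R) = p^{ed}\cdot e(\mathfrak m,R) \ge p^{ed}.$$
The equality case pins down both $e(\mathfrak m,R)=1$ and $\ell_R(R/\mathfrak m^{[p^e]}) = e(\mathfrak m^{[p^e]},R)$. The latter equality forces $R$ Cohen--Macaulay with $\mathfrak m^{[p^e]}$ generated by a system of parameters, so $\mu(\mathfrak m) = \mu(\mathfrak m^{[p^e]}) \le d$ and hence $R$ is regular. (Alternatively, as $F$-finite local rings are excellent and thus formally equidimensional, Nagata's characterization of regularity by multiplicity one applies directly once $e(\mathfrak m,R)=1$.)
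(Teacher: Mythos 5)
Your forward direction is correct and is the standard argument: complete, invoke Cohen's structure theorem, and exhibit the monomial basis $\{\lambda_j x_1^{a_1}\cdots x_d^{a_d}\}$; note the paper offers no proof to compare against, since it quotes the theorem directly from \cite{Kunz1}. In the backward direction, the reduction to the numerical identity $\length(R/\mathfrak m^{[p^e]})=p^{e\dim R}$ is also essentially fine: the count $\mu(F^e_*R)=[k:k^{p^e}]\,\length(R/\mathfrak m^{[p^e]})$ is correct, and the generic rank formula $\rank(F^e_*R)=p^{e\dim R}[k:k^{p^e}]$ is \cite[Proposition~2.3]{Kunz2}, which is independent of the regularity criterion (your reduction to a domain is vague, but one can instead localize the free module at each minimal prime of the reduced ring $R$ and apply the rank formula to each $R/P_i$, which forces equidimensionality and the stated rank).

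The step that fails is precisely the one you label the main obstacle: converting $\length(R/\mathfrak m^{[p^e]})=p^{ed}$ into regularity. The inequality $\length(R/I)\geq \eh(I;R)$ is a fact about ideals generated by a system of parameters; for general $\mathfrak m$-primary ideals it is false (already $I=\mathfrak m^2$ in a two-dimensional regular local ring gives $3<4$), and $\mathfrak m^{[p^e]}$ is a parameter ideal essentially only in the regular case you are trying to prove. Worse, the inequality fails asymptotically exactly for singular rings: for the quadric cone $R=k[[x,y,z]]/(xy-z^2)$ one has $\ehk(R)=3/2$, so $\length(R/\mathfrak m^{[q]})\approx \tfrac{3}{2}q^2 < 2q^2=\eh(\mathfrak m^{[q]};R)$ for $q\gg 0$. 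Hence your displayed chain cannot produce $\eh(\mathfrak m;R)=1$, and the equality-case analysis built on it collapses. What your reduction actually yields is $\ehk(R)=1$ (taking multiples of $e$), and the implication ``$\ehk(R)=1$ plus formally unmixed implies regular'' is the Watanabe--Yoshida theorem, a genuinely nontrivial result which, in this very paper, is deduced \emph{from} Kunz's theorem, so patching your argument that way would be circular. To finish honestly you need to use freeness/flatness of Frobenius itself, not just the length identity: for instance Kunz's original induction, or the standard argument that flatness gives $(J:x)^{[p^e]}=J^{[p^e]}:x^{p^e}$ for all $J$ and $x$ and then shows that a minimal generating set of $\mathfrak m$ is a regular sequence.
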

Motivated by Kunz's theorem, it is natural to study non-regular prime characteristic rings by studying algebraic, geometric, and homological properties of the family of $R$-modules $\{F^e_*R\}_{e\in\N}$ which distinguish $R$ from a regular local ring.  We consider the following measurements:
\begin{enumerate}
\item $\mu(F^e_*R)$, the minimal number of generators of $F^e_*R$ as an $R$-module;
\item $a_e(R)$, the largest rank of a free summand of $F^e_*R$;
\item $\beta_i^e(R):=\dim_k(\mbox{Tor}_i^R(k,F^e_*R))$, the $i$th Betti number of $F^e_*R$;
\item $\chi_i^e(R):=\sum_{j=0}^i(-1)^j\beta^e_{i-j}(R)$.
\end{enumerate}
The asymptotic ratio of the above numbers, as compared with the rank of $F^e_*R$, produces several interesting and important numerical invariants unique to rings of prime characteristic.
\begin{enumerate}
\item Hilbert--Kunz multiplicity: $\displaystyle \ehk(R)=\lim_{e\to \infty}\mu(F^e_*R)/\rank(F^e_*R)$, \cite{Monsky}. 
\item $F$-signature: $\displaystyle \fsig(R)=\lim_{e\to \infty}a_e(R)/\rank(F^e_*R)$, \cite{HunekeLeuschke, SmithVanDenBergh, Tucker}.
\item The $i$th Frobenius Betti number: $\displaystyle \beta_i^F(R)=\lim_{e\to\infty}\beta_i^e(R)/\rank(F^e_*R)$, \cite{AberbachLi}.
\item The $i$th Frobenius Euler characteristic: $\displaystyle \chi_i^F(R)=\lim_{e\to\infty}\chi_i^e(R)/\rank(F^e_*R)$, \cite{DSPY3}.
\end{enumerate}

This article concerns the equimultiplicity theory of the above numerical invariants, a topic initiated by the second author in \cite{equi}. Specifically, we are interested in understanding when the above measurements are unchanged under localization. Our main result in this direction is the following:

\begin{Theoremx}\label{Main equimultiplicity theorem} Let $(R,\mf m)$ be an $F$-finite and strongly $F$-regular local ring of prime characteristic $p>0$ and let $P\in \Spec R$. Let $\{\nu_e(R)\}_{e\in \NN}$ be one of the following sequences of numbers: 
\begin{itemize}
    \item $\{\mu(F^e_*R)\}_{e\in \NN}$
    \item $\{a_e(R)\}_{e\in \NN}$
    \item $\{\beta_i^e(R)\}_{e\in \NN}$
    \item $\{\chi_i^e(R)\}_{e\in \NN}$
\end{itemize}  
Let $\displaystyle \nu(R)=\lim_{e\to \infty}\nu_e(R)/\rank(F^e_*R)$.
Then the following are equivalent:
\begin{enumerate}
\item $\nu(R)=\nu(R_P)$;
\item For each $e\in \mathbb{N}$, $\nu_e(R)=\nu_e(R_P)$.
\end{enumerate}
\end{Theoremx}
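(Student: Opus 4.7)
The direction $(2) \Rightarrow (1)$ is immediate: since $R$ is a domain, $\rank_R(F^e_* R) = [K^{1/p^e}:K] = \rank_{R_P}(F^e_* R_P)$ where $K$ is the common fraction field, so termwise equality of $\nu_e(R)$ and $\nu_e(R_P)$ passes to equality of their normalized limits.

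For $(1) \Rightarrow (2)$, my plan is to establish, for each $e$, a one-sided inequality between $\nu_e(R)$ and $\nu_e(R_P)$, and then to interpret the resulting defect as the $e$th term of an auxiliary sequence $\delta_e$ of nonnegative integers whose normalized limit $\lim_e \delta_e/\rank(F^e_* R)$ realizes the defect $|\nu(R) - \nu(R_P)|$. The hypothesis (1) then forces $\delta_e = 0$ for every $e$. The one-sided inequalities themselves are elementary: $\mu(F^e_* R_P) \le \mu(F^e_* R)$ since localization cannot increase the minimal number of generators; $a_e(R) \le a_e(R_P)$ because a free summand of $F^e_* R$ localizes to a free summand of $F^e_* R_P$; and $\beta_i^e(R_P) \le \beta_i^e(R)$ because a minimal $R$-free resolution of $F^e_* R$ localizes to an $R_P$-free resolution that may fail to be minimal.

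The technical heart of the proof is identifying $\delta_e$ with a length-theoretic quantity whose normalized limit equals $|\nu(R) - \nu(R_P)|$. For $\nu_e = \mu(F^e_* R)$ this is essentially the equimultiplicity theorem for Hilbert--Kunz multiplicity due to the second author: the defect is expressible as the length of a naturally defined Artinian quotient. For $\nu_e = a_e(R)$ we expect to use the Aberbach--Enescu splitting ideals $I_e(R)$ with $a_e(R) = \lambda(R/I_e(R))$, exploiting that strong $F$-regularity is sufficient to ensure $I_e$ is compatible with localization at $P$, so that $a_e(R_P) - a_e(R)$ becomes the length of an $\mathfrak{m}$-supported module. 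The $\beta_i^e$ case is treated analogously via $\mathrm{Tor}$-modules, recording the length of the torsion picked up when passing from the minimal resolution upstairs to a possibly non-minimal resolution downstairs.

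The main obstacle will be twofold. First, verifying the precise convergence rates required so that the defect sequences have the asserted limits — known for $\mu$ and $a_e$ by Polstra--Tucker type uniform estimates but more delicate for higher $\beta_i^e$. Second, treating the Frobenius Euler characteristic $\chi_i^e$, whose defining alternating sum precludes direct monotonicity. For the latter, the plan is to combine the $\beta_j^e$ analyses for $j \le i$ via additivity of Euler characteristics along short exact sequences, using that strong $F$-regularity makes $R$ a direct summand of $F^e_* R$ to control all the $\mathrm{Tor}$-lengths uniformly in $e$ by Cartier-module methods.
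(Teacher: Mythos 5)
Your outline of the easy direction $(2)\Rightarrow (1)$ and the one-sided inequalities ($\mu(F^e_*R_P)\le\mu(F^e_*R)$, $a_e(R)\le a_e(R_P)$, $\beta_i^e(R_P)\le\beta_i^e(R)$) is correct and matches what the paper does implicitly. However, the crux of your plan for $(1)\Rightarrow (2)$ has a genuine gap. You write that the defect $\delta_e := |\nu_e(R)-\nu_e(R_P)|$ has normalized limit $|\nu(R)-\nu(R_P)|$ and then assert ``the hypothesis (1) then forces $\delta_e = 0$ for every $e$.'' But $\lim_e \delta_e/\rank(F^e_*R) = 0$ does not by itself force $\delta_e = 0$ for every $e$: a sequence of nonnegative integers can have $\delta_{e_0}>0$ for finitely many (or even infinitely many, but sparse) $e_0$ and still vanish when divided by $\rank(F^e_*R)\sim p^{ed}$. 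Some structural input is needed to show that a single nonzero term $\delta_{e_0}>0$ propagates to a positive limit, and your proposal does not supply it.

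The missing ingredient is precisely the mechanism by which strong $F$-regularity enters. The paper's key observation (its Lemma \ref{summand lemma} and the sibling Lemma \ref{summands of syzygies lemma}) is that if a module $M$ is a direct summand of $F^{e_0}_*R$ (or of a syzygy $\Omega_i^{e_0}(R)$), then for all large $e$ the module $F^e_*R$ (resp.\ $\Omega_i^e(R)$) contains $M^{\oplus a_{e-e_0}(R)}$ as a direct summand, hence the frequency $\srk_M(F^e_*R)/\rank(F^e_*R)$ is bounded below by $\fsig(R)/\rank(F^{e_0}_*R)>0$. A single defect at level $e_0$ is then detected in a fixed summand $M_{e_0}$ that reappears with positive density, and one gets a \emph{positive lower bound} on $|\nu(R)-\nu(R_P)|$ --- not an exact formula for the limit of $\delta_e$, which the argument never needs. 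Your proposed identification of $\lim \delta_e/\rank(F^e_*R)$ with a length of an Artinian quotient is in the spirit of \cite{equi} for $\mu$, but that approach required $R/P$ regular and is precisely what this paper is improving upon; it is not how the present proof goes, and for $a_e$ and $\beta_i^e$ you do not indicate what that length would even be. Finally, for $\chi_i^e$ your plan of ``combining the $\beta_j^e$ analyses by additivity'' runs into the problem that $\beta_j^F(R)=\beta_j^F(R_P)$ for all $j\le i$ is a strictly stronger hypothesis than $\chi_i^F(R)=\chi_i^F(R_P)$, so an argument in that direction proves a weaker statement. The paper handles this case cleanly via the identity $\rank(\Omega_{i+1}^e(R))=\chi_i^e(R)+(-1)^{i+1}\rank(F^e_*R)$, which converts the Euler characteristic comparison into a rank comparison of syzygies, and then applies the same summand-accumulation argument; you should look at Lemma \ref{rank of syzygy lemma} and Theorem \ref{Euler characteristic under localization}.
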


In the scenario that $\{\nu_e(R)\}_{e\in \NN}$ is the sequence of numbers $\{\mu(F^e_*R)\}_{e\in \NN}$ then Theorem~\ref{Main equimultiplicity theorem} is a significant improvement of \cite[Corollary~5.18]{equi}, where the same theorem was proven under the additional assumption that $R/P$ is a regular local ring.

It has been known for some time that Hilbert--Kunz multiplicity, $F$-signature, and Frobenius Betti numbers serve as measurements of singularities, see \cite{WatanabeYoshida, HunekeYao}, \cite{HunekeLeuschke,AberbachLeuschke}, and \cite{AberbachLi} respectively.  Frobenius Euler characteristic was developed in \cite{DSPY3} as a tool to prove that the functions $\beta_i^F\colon \Spec R\to \mathbb{R}$ sending $P\mapsto \beta_i^F(R_P)$ are upper semi-continuous and it was unclear from those techniques whether or not Frobenius Euler characteristic could be used to detect regular rings. Prior to this article, only the first Frobenius Euler characteristic was proven to serve as a measurement of singularity, see \cite[Main Theorem (iv)]{Li}. In the present article, we show Frobenius Euler characteristic does indeed serve as a measurement of singularities under the strongly $F$-regular hypothesis.

\begin{Theoremx}\label{Main Theorem equimultiplicity Frobenius Euler characteristic and Beti numbers} Let $(R,\fm)$ be an $F$-finite and strongly $F$-regular local ring of prime characteristic $p>0$. Then the following are equivalent:
\begin{enumerate}
\item $R$ is a regular local ring;
\item $\chi_i^e(R)=(-1)^i\rank(F^e_*R)$ for every $e\in \N$;
\item $\chi_i^e(R)=(-1)^i\rank(F^e_*R)$ for some $e\in \N_{\geq 1}$;
\item $\chi_i^F(R)=(-1)^i$.
\end{enumerate}
\end{Theoremx}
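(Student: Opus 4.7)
My plan is to establish the cycle $(1)\Rightarrow(2)\Rightarrow(3)\Rightarrow(1)$ together with $(2)\Rightarrow(4)\Rightarrow(3)$. The implications $(1)\Rightarrow(2)$, $(2)\Rightarrow(3)$, and $(2)\Rightarrow(4)$ are direct from Theorem~\ref{theorem Kunz}: if $R$ is regular then each $F^e_*R$ is free, so $\beta^e_0(R) = \rank(F^e_*R)$ and $\beta^e_k(R) = 0$ for $k\geq 1$, giving $\chi_i^e(R) = (-1)^i\rank(F^e_*R)$ and, after dividing by $\rank(F^e_*R)$ and passing to the limit, $\chi_i^F(R) = (-1)^i$. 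The content of the theorem lies in the two substantive implications that follow.

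For $(3)\Rightarrow(1)$, I would fix $e \geq 1$ with $\chi_i^e(R) = (-1)^i\rank(F^e_*R)$ and consider a minimal free resolution $\cdots \to F_1 \to F_0 \to F^e_*R \to 0$ with $F_j = R^{\beta_j^e(R)}$. Letting $\Omega = \ker(F_i \to F_{i-1})$, additivity of rank on the exact sequence $0 \to \Omega \to F_i \to \cdots \to F_0 \to F^e_*R \to 0$ yields
\[
\rank(\Omega) \;=\; (-1)^{i+1}\Bigl(\rank(F^e_*R) - \sum_{k=0}^{i}(-1)^k\beta^e_k(R)\Bigr),
\]
and the hypothesis rewrites as $\sum_{k=0}^{i}(-1)^k\beta^e_k(R) = (-1)^i\chi_i^e(R) = \rank(F^e_*R)$, forcing $\rank(\Omega)=0$. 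Since strongly $F$-regular local rings are normal domains, $\Omega\hookrightarrow F_i$ is torsion-free, hence $\Omega=0$ and $F^e_*R$ has projective dimension at most $i$. Strongly $F$-regular rings are Cohen--Macaulay, and powers of a system of parameters remain a regular sequence, so $\depth_R(F^e_*R) = \dim R = \depth R$; the Auslander--Buchsbaum formula then forces $\mathrm{pd}_R(F^e_*R)=0$. Thus $F^e_*R$ is free and $R$ is regular by Theorem~\ref{theorem Kunz}.

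For $(4)\Rightarrow(3)$, I would apply Theorem~\ref{Main equimultiplicity theorem} to the sequence $\nu_e = \chi_i^e$ and the prime $P=(0)$, which lies in $\Spec R$ because $R$ is a domain. At the generic point $R_{(0)} = K$ the module $F^e_*K$ is a $K$-vector space, so $\beta^e_k(R_{(0)})=0$ for $k\geq 1$ and $\chi_i^e(R_{(0)}) = (-1)^i\rank(F^e_*R)$ for every $e$, whence $\chi_i^F(R_{(0)}) = (-1)^i$. Hypothesis (4) then reads $\chi_i^F(R) = \chi_i^F(R_{(0)})$, and Theorem~\ref{Main equimultiplicity theorem} yields $\chi_i^e(R) = \chi_i^e(R_{(0)}) = (-1)^i\rank(F^e_*R)$ for every $e$, which is (3). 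The main obstacle is absorbed into Theorem~\ref{Main equimultiplicity theorem} itself; once that is in hand, the present theorem reduces to the elementary observation that the generic point trivializes Frobenius Betti numbers, so that $P=(0)$ becomes the natural witness for the rigidity demanded by (4).
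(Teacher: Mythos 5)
Your proposal is correct and follows essentially the same route as the paper: your rank count on the minimal free resolution forcing the $(i+1)$st syzygy to vanish, followed by Auslander--Buchsbaum and Kunz's theorem, is exactly the content of the paper's Lemmas~\ref{projective dimension lemma}--\ref{lower bound for Euler characteristic} (which establish the equivalence of (1)--(3)), and your treatment of (4) via localization at the generic point together with Theorem~\ref{Main equimultiplicity theorem} (i.e.\ Theorem~\ref{Euler characteristic under localization}) is precisely the paper's argument. The only cosmetic difference is that you verify the equality case directly rather than first proving the general lower bound $\chi_i^e(R)\geq(-1)^i\rank(F^e_*R)$, which changes nothing of substance.
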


We conjecture that Theorem~\ref{Main Theorem equimultiplicity Frobenius Euler characteristic and Beti numbers} can be proven under weaker hypotheses. It seems likely that, similar to Hilbert--Kunz multiplicity and Frobenius Betti numbers, one would only need to assume the completion of the local ring $R$ at the maximal ideal has no low-dimensional components in order to know that $\chi_i^F(R)=(-1)^i$ implies $R$ is regular.  


The $F$-signature of a local ring $(R,\mf m, k)$ can be studied through splitting ideals, a notion originating in \cite{AberbachEnescu2}. For each $e\in \N$ the $e$th splitting ideal is 
\[
I_e=\{r\in R\mid \varphi(F^e_*r)\in \mf m, \forall \varphi\in \Hom_R(F^e_*R,R)\}
\]
and the $F$-signature of $R$ is realized as the limit $\fsig(R)=\lim_{e\to \infty}\length(R/I_e)/p^{e\dim(R)}$. To better understand the behavior of $F$-signature under localizations we consider relative splitting ideals: for each ideal $I\subseteq R$ and $e\in \N$ let 
\[
I_e(I)=\{r\in R\mid \varphi(F^e_*r)\in I, \forall \varphi\in \Hom_R(F^e_*R,R)\}.
\]
Observe that $I_e(\mf m)=I_e$. If $I\subseteq R$ is an $\mf m$-primary ideal then we can define an $F$-signature relative to the ideal $I$ as $\fsig(I)=\lim_{e\to \infty}\length(R/I_e(I))/p^{e\dim(R)}$, a limit we will observe exists.  Not only do relative splitting ideals allow us to understand the behavior of $F$-signature in the context of Theorem~\ref{Main Theorem equimultiplicity Frobenius Euler characteristic and Beti numbers}, we prove the following associativity type formula for $F$-signature which is of independent interest.
The corresponding formula for Hilbert--Samuel multiplicity is 
a classic and very useful result of Lech (\cite{Lech}),
the version for Hilbert--Kunz multiplicity can be found in \cite[Proposition~5.4]{equi} and was extensively used therein.

\begin{Theoremx}\label{Main Theorem Associativity type formula} 
Let $(R, \mf m)$ be an $F$-finite local domain of prime characteristic $p>0$. Suppose that $I\subseteq R$ is an ideal such that $R/I$ is Cohen-Macaulay of dimension $h$ and $\underline{x}=x_1,\ldots, x_h$ a parameter sequence on $R/I$. Then 
\[
\lim_{n_1,\ldots,n_h\to \infty}\frac{1}{n_1\cdots n_h}\fsig(I,(x_1^{n_1},\ldots, x_h^{n_h}))=\sum_{P} \eh(x_1,\ldots,x_h;R/P)\fsig(IR_P),
\]
where the sum is taken over all prime ideals $P\supseteq I$ such that $\dim(R/I)=\dim(R/P)$.
\end{Theoremx}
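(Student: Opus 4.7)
The strategy mirrors that of the Hilbert--Kunz associativity formula \cite[Proposition~5.4]{equi}, replacing the generator count $\mu(F^e_*R)$ by the splitting-ideal length $\length(R/I_e(\cdot))$ throughout, and closing with an interchange of limits. For $\underline{n} = (n_1,\ldots,n_h) \in \NN_{\geq 1}^h$, set $J_{\underline{n}} := I + (x_1^{n_1}, \ldots, x_h^{n_h})$, which is $\mf m$-primary since the $x_i$ form a parameter sequence on $R/I$. Write $d = \dim R$. The first task is to reinterpret, at each fixed $e$, the quantity $\length(R/I_e(J))$ as a length function naturally attached to the finitely generated $R$-module $F^e_*R$ modulo $J$ (namely, as the maximal $R/J$-free rank of $F^e_*R \otimes_R R/J$), so that standard Hilbert--Samuel machinery, and in particular Lech's associativity theorem \cite{Lech}, becomes applicable.

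The second step is to prove the finite-level associativity
\[
\lim_{\underline{n}\to\infty} \frac{\length(R/I_e(J_{\underline{n}}))}{n_1 \cdots n_h} = \sum_{P} \eh(x_1,\ldots,x_h;R/P) \cdot \length(R_P/I_e(IR_P)),
\]
where $P$ ranges over primes with $I \subseteq P$ and $\dim(R/P) = h$. Applying Lech's formula to the length function from Step~1 reduces the global computation to a sum of local contributions at the top-dimensional minimal primes of $I$. A key input is the localization-compatibility of splitting ideals, $I_e(J)R_P = I_e(JR_P)$, which follows from the strongly $F$-regular hypothesis on $R$ (which passes to $R_P$ and ensures that $\Hom_R(F^e_*R,R)$ localizes well).

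Finally, I would divide both sides by $p^{ed}$ and let $e \to \infty$. Because the sum on the right is finite, it converges termwise to $\sum_P \eh(x_1,\ldots,x_h;R/P) \fsig(IR_P)$. On the left, the difficulty is interchanging the limits in $\underline{n}$ and $e$; the plan is to establish a uniform convergence estimate
\[
\left| \frac{\length(R/I_e(J_{\underline{n}}))}{p^{ed}} - \fsig(J_{\underline{n}}) \right| \leq \frac{C}{p^e}
\]
with $C$ independent of $\underline{n}$, in the spirit of the known uniform convergence results for $F$-signature. \textbf{This uniformity is the central obstacle of the argument:} obtaining an error bound that does not degrade as the $n_i$ grow demands simultaneous control of splitting ideals along the parameter deformation and for all $e$, and it is precisely where the strongly $F$-regular hypothesis is exploited. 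Once the uniform estimate is secured, the limits exchange and both sides of the desired formula match, completing the proof.
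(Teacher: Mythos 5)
Your overall architecture (a fixed-$e$ associativity statement plus a uniform-in-$\underline{n}$ convergence estimate that lets you interchange the limits in $e$ and $\underline{n}$) is the same as the paper's, but the heart of the matter, your Step~2, is asserted rather than proved, and the mechanism you propose for it does not work. First, $\length(R/I_e(J))$ is not the maximal $R/J$-free rank of $F^e_*R\otimes_R R/J$: already for $J=\fm$ that rank is $\mu(F^e_*R)$, whereas $\length(R/I_e(\fm))=a_e(R)/[k:k^{p^e}]$. More importantly, $\length(R/I_e(I+(x_1^{n_1},\ldots,x_h^{n_h})))$ is not of the form $\length(M/(x_1^{n_1},\ldots,x_h^{n_h})M)$ for any fixed module $M$, because $J\mapsto I_e(J)$ does not commute with adjoining powers of the $x_i$; so Lech's theorem cannot be applied directly to a ``length function attached to $F^e_*R$ modulo $J$.'' What actually has to be shown is that, for fixed $e$, the ideals $I_e(I+(\underline{x}^{\underline{n}}))$ agree asymptotically with $I_e(I)+(x_1^{n_1p^e},\ldots,x_h^{n_hp^e})$, i.e. that $\lim_{\underline{n}}\length(R/I_e(I+(\underline{x}^{\underline{n}})))/(n_1\cdots n_h)=\eh(x_1^{p^e},\ldots,x_h^{p^e};R/I_e(I))$; only then does Lech's associativity formula (applied to the module $R/I_e(I)$, together with $I_e(\fa)R_P=I_e(\fa R_P)$ and the fact that $\underline{x}$ remains regular on $R/I_e(I)$, parts (6) and (9) of Lemma~\ref{Basic properties}) produce the local terms $\length(R_P/I_e(IR_P))$, which after dividing by $p^{ed}$ converge to $\fsig(IR_P)$. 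The paper proves this fixed-$e$ statement by a telescoping bisequence argument in the case $h=1$ (using crucially the colon formula $I_e(\fa:J)=I_e(\fa):J^{[p^e]}$ and Cohen--Macaulayness of $R/I$), a monotonicity lemma (Lemma~\ref{fsig monotone}) ensuring the relevant limits are suprema, and then induction on $h$ with Lech's formula used to rearrange the resulting double sum. None of this is supplied by your plan, and it, not the limit interchange, is the genuinely new step.

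Two further corrections. The theorem carries no strongly $F$-regular hypothesis --- it is stated for $F$-finite local domains --- so it cannot be ``exploited'' anywhere: localization of splitting ideals holds for every $F$-finite ring (Lemma~\ref{Basic properties}(6)), and the uniform convergence estimate uses only that $F_*R$ is torsion-free (the domain hypothesis), via an embedding of $F_*R$ into a free module as in the Polstra--Tucker argument. Finally, your proposed bound $\left|\length(R/I_e(J_{\underline{n}}))/p^{ed}-\fsig(J_{\underline{n}})\right|\leq C/p^e$ with $C$ independent of $\underline{n}$ is stronger than what is true or needed: the error necessarily scales with the colength of $J_{\underline{n}}$, and the correct statement (Lemma~\ref{Technical lemma}) is a bound of the form $Cn_1\cdots n_h/p^e$, which is exactly enough once you divide by $n_1\cdots n_h$.
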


Section~\ref{Section preliminary} contains background results and basic properties of splitting ideals relative to an ideal. The proofs of Theorem~\ref{Main equimultiplicity theorem} and Theorem~\ref{Main Theorem equimultiplicity Frobenius Euler characteristic and Beti numbers} can be found in Section~\ref{Section Equimultiplicity}. We also use Section~\ref{Section Equimultiplicity} to further explore the behavior of splitting ideals. For example, see Theorem~\ref{Theorem depth R/P vs depth R/I_e(P)} for a proof that $\depth(R/P)=\depth(R/I_e(P))$ whenever $P$ is a prime ideal of a strongly $F$-regular local ring satisfying $\fsig(R)=\fsig(R_P)$. Section~\ref{Section Associativity} is devoted to proving Theorem~\ref{Main Theorem Associativity type formula}.

\subsection*{Acknowledgements} The authors thank Alessandro De Stefani for valuable feedback on a preliminary draft of this article. 

\section{Preliminary Results}\label{Section preliminary}

\subsection{Hilbert--Kunz multiplicity}

Monsky's introduction of Hilbert--Kunz multiplicity is a continuation of Kunz's work on prime characteristic rings in \cite{Kunz1, Kunz2}.

\begin{definition}\label{HK definition}
Let $(R, \mf m)$ be a local ring of prime characteristic $p>0$ and $I$ be 
an $\mf m$-primary ideal. 
Denote $\frq{I} = (x^{p^e} \mid x \in I)$. Then
the Hilbert--Kunz multiplicity of $I$ is 
\[
\ehk(I) = \lim_{e \to \infty} \frac{\length (R/\frq{I})}{p^{e\dim R}}.
\]
\end{definition}
The Hilbert--Kunz multiplicity of a local ring $(R,\mf m,k)$ is the Hilbert--Kunz multiplicity of the maximal ideal $\mf m$ and is denoted by $\ehk(R)$. If $R$ is an $F$-finite domain then $\rank( F^e_*R) = p^{e \dim R}[k:k^{p^e}]$ by \cite[Proposition~2.3]{Kunz2} and therefore 
\[
\ehk(R)=\lim_{e\to \infty}\frac{\lambda(R/\mf m^{[p^e]})}{p^{e\dim(R)}}=\lim_{e\to\infty}\frac{\mu(F^e_*R)}{\rank(F^e_*R)}.
\]
Hence the definition of Hilbert--Kunz multiplicity presented in the introduction agrees with Definition~\ref{HK definition}.

\subsection{$F$-signature and splitting ideals}
Below is a definition, due to Tucker, which is a natural generalization of the splitting ideals and presents a natural extension of $F$-signature.

\begin{definition}\label{degen def}
Let $R$ be an $F$-finite ring and $\fa$ be an ideal.  
The $e$th splitting ideal of $\fa$ is defined as 
\[
I_e(\fa) = \{r\in R\mid \varphi(F^e_*r)\in \mf a, \forall \varphi\in\Hom_R(F^e_*R,R)\}.
\] 
\end{definition}
We record the following basic properties concerning splitting ideals, many of which mimic the behavior of the standard splitting ideals $I_e=I_e(\mf m)$.
\begin{lemma}\label{Basic properties}
Suppose $(R,\fm)$ is an $F$-finite local ring of prime characteristic $p>0$ and Krull dimension $d$. Let 
$\fa \subset R$ be an ideal.
Then the sequence of ideals $\{I_e(\fa)\}$ satisfies the following properties:
\begin{enumerate}
\item $I_e(\fa)$ is an ideal;
\item $\fa^{[p^e]}\subseteq I_e(\fa)$;
\item $I_e(\fa)^{[p]}\subseteq I_{e+1}(\fa)$;
\item $\varphi(F^{e_0}_*I_{e+e_0}(\fa))\subseteq I_e(\fa)$ for every $e,e_0\in\mathbb{N}$ and $\varphi\in\Hom_R(F^{e_0}_*R,R)$;
\item If $\fa$ is $\fm$-primary then the limit $\fsig(\mf a):=\lim\limits_{e\to \infty }\frac{\length(R/I_e(\fa))}{p^{ed}}$ exists and $\length(R/I_e(\fa))=\fsig(\fa)p^{ed}+O(p^{e(d-1)})$. The value $\fsig(\fa)$ is referred to as the $F$-signature of $\fa$;
\item If $W$ is a multiplicative set then $I_e(\fa)R_W=I_e(\fa R_W)$;
\item $I_e(\fa : J) = I_e(\fa) : J^{[p^e]}$ for all ideals $J$;
\item If $P$ is a prime ideal then $I_e(P)$ is $P$-primary;
\item If $x\in R$ is regular element of $R/\mf a$ then $x$ is regular element of $R/I_e(\mf a)$ for every $e\in \NN$; 
\item If $R$ is a regular local ring then $I_e(\fa)=\fa^{[p^e]}$ for every $e\in \N$;
\item If $\mf b\subseteq R$ is an ideal and $\mf a\subseteq \mf b$ then $I_e(\mf a)\subseteq I_e(\mf b)$;
\end{enumerate}
\end{lemma}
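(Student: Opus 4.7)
The plan is to verify the eleven properties using the defining formula and the $R$-module structure $a \cdot F^e_*x = F^e_*(a^{p^e}x)$; most are formal manipulations, and I group them by technique. Item (5) is the real technical step.

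Items (1), (2), (10), (11) come directly from the defining formula. For (1), given $r \in I_e(\fa)$ and $s \in R$, the map $\psi \colon F^e_*x \mapsto \varphi(F^e_*(sx))$ is $R$-linear for any $\varphi$, so $\varphi(F^e_*(sr)) = \psi(F^e_*r) \in \fa$. Item (2) follows from $\varphi(F^e_*(a^{p^e}y)) = a\varphi(F^e_*y) \in \fa$ for $a \in \fa$. For (10), Kunz's theorem gives a free basis $\{F^e_*e_i\}$ of $F^e_*R$; using the dual basis $\{\varphi_i\}$, every $r$ satisfies $r = \sum_i \varphi_i(F^e_*r)^{p^e} e_i$, forcing $I_e(\fa) \subseteq \fa^{[p^e]}$. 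Item (11) is immediate.

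Items (3), (4), (6)--(9) use composition and colon manipulations. For (4), the composition $\psi \circ F^e_*\varphi \colon F^{e+e_0}_*R \to R$ is $R$-linear, and its value on $F^{e+e_0}_*r$ is $\psi(F^e_*\varphi(F^{e_0}_*r))$; when $r \in I_{e+e_0}(\fa)$ this lies in $\fa$. Item (3) follows from (4) with $e_0 = 1$ applied to the Frobenius $F^e_*R \to F^{e+1}_*R$, $F^e_*x \mapsto F^{e+1}_*(x^p)$. For (6), $F$-finiteness yields $\Hom_R(F^e_*R,R)_W \cong \Hom_{R_W}(F^e_*R_W, R_W)$ and the equality follows. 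For (7), $r \in I_e(\fa : J)$ iff $\varphi(F^e_*(j^{p^e}r)) = j\varphi(F^e_*r) \in \fa$ for every $j \in J$ and $\varphi$, which rearranges to $rJ^{[p^e]} \subseteq I_e(\fa)$. Item (9) is (7) with $J = (x)$, using $I_e(\fa) \colon x \subseteq I_e(\fa) \colon x^{p^e} = I_e(\fa)$. For (8), taking $a \notin P$ and applying (7) gives $I_e(P) \colon a^{p^e} = I_e(P \colon a) = I_e(P)$, hence $I_e(P) \colon a = I_e(P)$ and $a$ is a non-zero-divisor on $R/I_e(P)$; combined with $P^{[p^e]} \subseteq I_e(P)$ from (2), this shows $\sqrt{I_e(P)} = P$ and that $I_e(P)$ is $P$-primary.

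The main obstacle is (5). I will adapt Tucker's argument from \cite{Tucker} for the existence of the ordinary $F$-signature. The containment $I_e(\fa)^{[p]} \subseteq I_{e+1}(\fa)$ from (3), together with a uniform bound on the number of generators of $F^e_*R$ localized away from the minimal primes of $R$ (where $F$-finiteness is essential), produces a Cauchy-type estimate
\[
\Bigl|\frac{\length(R/I_{e+e_0}(\fa))}{p^{(e+e_0)d}} - \frac{\length(R/I_{e_0}(\fa))}{p^{e_0 d}}\Bigr| \le \frac{C}{p^{e_0}},
\]
for a constant $C$ depending only on the $F$-finite data of $R$ and not on $\fa$. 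This yields both convergence and the refined error estimate $\length(R/I_e(\fa)) = \fsig(\fa)p^{ed} + O(p^{e(d-1)})$, and since the bound does not see $\fa$, the translation from the $\fa = \fm$ case is essentially notational.
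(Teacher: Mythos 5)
Your proposal is correct and takes essentially the same approach as the paper, which simply leaves (1)--(4) to the reader, cites \cite[Corollary~4.5]{PolstraTucker} for (5), and gives brief indications for (6)--(11) that match yours. One small imprecision: (3) does not literally "follow from (4)" — part (4) concerns descending from $I_{e+e_0}(\fa)$ to $I_e(\fa)$ by composing on the outside with $\varphi\in\Hom_R(F^{e_0}_*R,R)$, while (3) goes upward from $I_e(\fa)$ to $I_{e+1}(\fa)$ by precomposing a $\psi\in\Hom_R(F^{e+1}_*R,R)$ with the $R$-linear Frobenius $F^e_*R\to F^{e+1}_*R$, $F^e_*x\mapsto F^{e+1}_*x^p$ — but the argument you actually describe (precomposition with this Frobenius map) is exactly right, so this is a matter of phrasing rather than substance. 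Your sketch of (5) agrees in outline with the cited result, and the explicit colon-ideal manipulations for (7)--(9) are the same as the paper's.
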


\begin{proof} The proofs of (1)-(4) are straightforward and are left to the reader. Statement (5) then follows by  \cite[Corollary~4.5]{PolstraTucker}. To prove (6) it is enough to observe $\Hom_R(F^e_*R,R)_W\cong \Hom_{R_W}(F^e_*R_W, R_W)$. For (7) we note that $a \in I_e(\fa : J)$ if and only if  for all $\varphi\in \Hom_R(F^e_*R,R)$ we have $\varphi(F^e_*a) \in (\fa:J)$, or equivalently, $\varphi(F^e_*J^{[p^e]} a) = J \varphi(F^e_*a) \subseteq \fa$.  Statements (8) and (9) easily follow from (7). Observation (10) follows from Theorem~\ref{theorem Kunz}; if $F^e_*R$ is free then it is then easy to see that $F^e_*I_e(\fa)=\fa F^e_*R$ from which it follows that $I_e(\fa)=\fa^{[p^e]}$. Property (11) is trivial.
\end{proof}

\begin{corollary}\label{cor pos diff}
Let $(R,\mf m)$ be an $F$-finite and $F$-pure local ring of prime characteristic $p>0$. If $J\subsetneq I$ are ideals then $I_e(J)\subsetneq I_e(I)$. Moreover, if $R$ is strongly $F$-regular and $J, I$ are  $\mf m$-primary, then $\fsig(J) > \fsig(I)$.  
\end{corollary}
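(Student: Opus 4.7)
The plan is to prove the strict containment of splitting ideals by a direct $F$-splitting argument, and then upgrade to a strict inequality of $F$-signatures by feeding the splitting into a colon short exact sequence whose ``leftover'' term is controlled by the strongly $F$-regular hypothesis.

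For the strict containment $I_e(J)\subsetneq I_e(I)$ under $F$-purity, I would pick any $r\in I\setminus J$ and look at $r^{p^e}$. Property (2) puts $r^{p^e}\in I^{[p^e]}\subseteq I_e(I)$ for free. Since $R$ is $F$-finite and $F$-pure, every iterate of Frobenius is split, so there exists $\varphi\in\Hom_R(F^e_*R,R)$ with $\varphi(F^e_*1)=1$. For this $\varphi$, one has $\varphi(F^e_*r^{p^e})=r\varphi(F^e_*1)=r\notin J$, certifying $r^{p^e}\notin I_e(J)$.

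For the strict $F$-signature inequality in the strongly $F$-regular case, pick $x\in I\setminus J$. Since $J$ is $\mf m$-primary and $x\notin J$, the colon $J:x$ is a proper $\mf m$-primary ideal. I would feed $x$ into the standard short exact sequence
\[
0\to R/(I_e(J):x^{p^e})\xrightarrow{\ \cdot\, x^{p^e}\ }R/I_e(J)\to R/(I_e(J)+(x^{p^e}))\to 0.
\]
Property (7) identifies the leftmost module as $R/I_e(J:x)$, and properties (11) and (2) yield $I_e(J)+(x^{p^e})\subseteq I_e(I)$, so the rightmost module surjects onto $R/I_e(I)$. Taking lengths, dividing by $p^{e\dim R}$, and passing to the limit using (5) gives
\[
\fsig(J)\geq\fsig(J:x)+\fsig(I).
\]
Monotonicity (11) applied to $J:x\subseteq\mf m$ forces $\fsig(J:x)\geq\fsig(\mf m)=\fsig(R)$, and strong $F$-regularity supplies $\fsig(R)>0$, finishing the proof.

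The main step requiring care is the appearance of the intermediate ideal $J:x$ in the short exact sequence. The bare strict containment $I_e(J)\subsetneq I_e(I)$ from the first part only yields $\length(R/I_e(J))-\length(R/I_e(I))\geq 1$, which is nowhere near the $\Theta(p^{e\dim R})$ gap one needs for a strict inequality of $F$-signatures. Isolating the colon ideal $J:x$, whose $F$-signature is itself bounded below by the positive number $\fsig(R)$, is precisely what converts strict containment for each $e$ into strict inequality of the limits.
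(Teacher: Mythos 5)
Your proof is correct and takes essentially the same approach as the paper: both hinge on the identification $I_e(J):x^{p^e}=I_e(J:x)$ from property (7), the containment $J:x\subseteq\mf m$, and the facts that $F$-purity gives $I_e(\mf m)\neq R$ while strong $F$-regularity gives $\fsig(R)>0$. Your direct construction of a splitting witness in the first part and the short exact sequence packaging in the second are cosmetic variants of the paper's argument (which instead reduces WLOG to $I=(J,x)$ and computes $\length(I_e(J,x)/I_e(J))$ directly), and the bound $\fsig(J)\geq\fsig(J:x)+\fsig(I)$ you extract is slightly sharper than needed but entirely sound.
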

\begin{proof}
Without loss of generality we may assume $I = (J, x)$ and $x\not\in J$.
Then $I_e(J):x^{p^e} = I_e(J:x) \subseteq I_e(\mf m) \neq R$, so 
$I_e(J) \subsetneq I_e(J) + (x^{p^e}) \subseteq I_e(J, x).$ 

For the second part, observe that 
\[
\length \left(\frac{I_e(J, x)}{I_e(J)} \right)
\geq \length \left(\frac{I_e(J) + (x^{p^e})}{I_e(J)} \right)
= \length (R/(I_e(J):x^{p^e})) \geq \length (R/I_e(\mf m)).
\]
Therefore $\fsig(J)-\fsig((J,x))\geq \fsig(R) > 0$.
\end{proof}

Similar to the usual $F$-signature, the $F$-signature of an $\mf m$-primary ideal $\mf a$ is realized as the limit of normalized Hilbert--Kunz multiplicities of the ideals $I_e(\mf a)$.

\begin{theorem} Let $(R,\mf m)$ be an $F$-finite and reduced local ring of prime characteristic $p>0$ and $\mf a$ an $\mf m$-primary ideal. Then
\[
\fsig(\mf a)=\lim_{e\to \infty}\frac{\ehk(I_e(\mf a))}{p^{e\dim(R)}}.
\]
\end{theorem}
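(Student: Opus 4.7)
The plan is a sandwich argument on $L_e := \ehk(I_e(\fa))/p^{ed}$, where $d := \dim R$: establish $L_e \geq \fsig(\fa)$ for every $e$ together with $\lim_e L_e \leq \fsig(\fa)$. For the lower bound, iterating Lemma~\ref{Basic properties}(3) yields $I_e(\fa)^{[p^{e'}]} \subseteq I_{e+e'}(\fa)$, hence
\[
\frac{\length(R/I_e(\fa)^{[p^{e'}]})}{p^{(e+e')d}} \;\geq\; \frac{\length(R/I_{e+e'}(\fa))}{p^{(e+e')d}}.
\]
Letting $e' \to \infty$, the left side tends to $\ehk(I_e(\fa))/p^{ed}$ by the definition of Hilbert--Kunz multiplicity, and the right side tends to $\fsig(\fa)$ by Lemma~\ref{Basic properties}(5). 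Moreover, $I_e(\fa)^{[p]} \subseteq I_{e+1}(\fa)$ combined with the general identity $\ehk(J^{[p]}) = p^d \ehk(J)$ implies $L_e$ is weakly decreasing, so it converges to some $L_\infty \geq \fsig(\fa)$.

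For the reverse inequality I would control the defect $D(e,e') := \length(I_{e+e'}(\fa)/I_e(\fa)^{[p^{e'}]})$, noting that $\lim_{e' \to \infty} D(e,e')/p^{(e+e')d} = L_e - \fsig(\fa)$, and show this tends to $0$ as $e \to \infty$. The key structural input is Lemma~\ref{Basic properties}(4): every $\varphi \in \Hom_R(F^{e'}_* R, R)$ sends $F^{e'}_* I_{e+e'}(\fa)$ into $I_e(\fa)$. Decomposing $F^{e'}_* R \cong R^{a_{e'}} \oplus M_{e'}$ where $M_{e'}$ has no free $R$-summand, the projections to the free summands force the image of $F^{e'}_* I_{e+e'}(\fa)$ inside the quotient $F^{e'}_* R / I_e(\fa) F^{e'}_* R$ to lie entirely in the non-free part $M_{e'}/I_e(\fa) M_{e'}$.

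The main obstacle is upgrading this structural containment into a bound of order $o(p^{(e+e')d})$ on $D(e,e')$. A naive rank-based estimate on $M_{e'}$ is insufficient, since it yields only an error proportional to $(1-\fsig(R))\fsig(\fa)$ rather than a vanishing quantity. The remedy is to iterate the analysis on $M_{e'}$ itself via further Frobenius pushforwards, combined with the uniform convergence estimates $\length(R/I_e(\fa)) = \fsig(\fa) p^{ed} + O(p^{e(d-1)})$ from Lemma~\ref{Basic properties}(5) and Tucker's uniform bound $a_{e'}/\rank(F^{e'}_* R) = \fsig(R) + O(p^{-e'})$. Together these should force $D(e,e')/p^{(e+e')d} \to 0$ in the iterated limit, yielding $L_\infty = \fsig(\fa)$ and completing the sandwich.
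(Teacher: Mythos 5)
Your lower bound argument is correct and elegant: iterating Lemma~\ref{Basic properties}(3) gives $I_e(\fa)^{[p^{e'}]} \subseteq I_{e+e'}(\fa)$, from which $\ehk(I_e(\fa))/p^{ed} \geq \fsig(\fa)$ follows by comparing colengths and passing to the limit in $e'$, and the same containment with $e'=1$ together with $\ehk(J^{[p]})=p^d\ehk(J)$ shows $L_e := \ehk(I_e(\fa))/p^{ed}$ is weakly decreasing. This is a genuinely different route from the paper, which does not isolate this one-sided estimate at all. The calculation $\lim_{e'\to\infty} D(e,e')/p^{(e+e')d} = L_e - \fsig(\fa)$ is also correct.

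However, the upper bound $L_\infty \leq \fsig(\fa)$ --- which is the actual content of the theorem --- is not proved. You acknowledge this yourself: the naive estimate $D(e,e') \leq \length(M_{e'}/I_e(\fa)M_{e'}) \leq (\mu(F^{e'}_*R) - a_{e'})\length(R/I_e(\fa))$ is of order $p^{(e+e')d}$ with a nonzero constant, and the proposed remedy (``iterate the analysis on $M_{e'}$ itself via further Frobenius pushforwards'') is a gesture, not an argument. It is not clear what ``iterating on $M_{e'}$'' means ($M_{e'}$ is not a ring), and no mechanism is given by which successive applications would drive the error to zero rather than accumulate. The phrase ``these should force'' is doing all of the work. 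As written, you have shown that the decreasing sequence $L_e$ converges to \emph{some} value $L_\infty \geq \fsig(\fa)$, not that $L_\infty = \fsig(\fa)$.

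By contrast, the paper's proof is essentially a one-line reduction: since $\length(R/I_e(\fa))/p^{ed} \to \fsig(\fa)$ by Lemma~\ref{Basic properties}(5), the theorem is equivalent to $\lim_{e\to\infty} |\ehk(I_e(\fa)) - \length(R/I_e(\fa))|/p^{ed} = 0$, which is precisely \cite[Corollary~3.7]{Tucker}. Tucker's result is a uniform estimate of the form $|\ehk(J) - \length(R/J)| = O(p^{e(d-1)})$ for ideals $J$ in the relevant range, and it is exactly this uniform control that your sketch is trying to recreate from scratch in the upper-bound direction. Unless you intend to reprove Tucker's Corollary~3.7 (in which case the hard analytic content must actually appear), you should invoke it; your lower-bound argument then becomes an optional refinement showing that the convergence is monotone from above, which is a nice additional observation but not a substitute for the missing half.
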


\begin{proof}
The assertion is equivalent to saying that
\[
\lim_{e \to \infty} \frac{1}{p^{e\dim R}}
\left|\length (R/I_e(\fa)) - \ehk(I_e(\fa))\right|= 0,
\]
which is the content of \cite[Corollary~3.7]{Tucker}.
\end{proof}

\section{Equimultiplicity of $F$-invariants}\label{Section Equimultiplicity}

We are interested in understanding when the invariants Hilbert--Kunz multiplicity, $F$-signature, Frobenius Betti numbers, and Frobenius Euler characteristics are unchanged under localization. Work of the second author in \cite{equi} began this study for Hilbert--Kunz multiplicity where the following was proven:

\begin{theorem}[{\cite[Corollary~5.16]{equi}}]\label{SmirnovEquiTheorem}
Let $(R,\fm)$ be an excellent weakly $F$-regular local ring of prime characteristic $p>0$ and $P\subset R$ a prime ideal such that $R/P$ is a regular local ring. Then the following are equivalent:
\begin{enumerate}
\item $\ehk(R)=\ehk(R_P)$,
\item for each $e\in \N$, $\length(R/\fm^{[p^e]})/p^{e\dim(R)}=\length(R_P/P^{[p^e]}R_P)/p^{e\height(P)}$.
\end{enumerate}
\end{theorem}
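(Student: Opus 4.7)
The direction $(2)\Rightarrow(1)$ is immediate upon taking $e\to\infty$ in (2) and dividing appropriately. For $(1)\Rightarrow(2)$, the plan is to first establish a pointwise inequality
\[
\length(R/\mf m^{[p^e]}) \;\geq\; p^{e(d-h)}\length(R_P/P^{[p^e]}R_P) \qquad (d := \dim R,\ h := \height P)
\]
valid for every $e$, and then upgrade equality of the two limits to equality at every $e$ using the weakly $F$-regular hypothesis.

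For the inequality, since $R/P$ is regular of dimension $d-h$, I would lift a regular system of parameters of $R/P$ to elements $y_1,\ldots,y_{d-h}\in\mf m$, giving $\mf m^{[p^e]} = P^{[p^e]} + (y_1^{p^e},\ldots,y_{d-h}^{p^e})$. Serre's inequality applied to the local ring $R/P^{[p^e]}$ (which has dimension $d-h$) with system of parameters $\underline{y}^{p^e}$ then yields
\[
\length(R/\mf m^{[p^e]}) \;\geq\; \eh(\underline{y}^{p^e};\,R/P^{[p^e]}) \;=\; p^{e(d-h)}\,\eh(\underline{y};\,R/P^{[p^e]}).
\]
Since $R/P$ is a domain, $P$ is the unique minimal prime of $P^{[p^e]}$ of maximal dimension, and the classical associativity formula for Hilbert--Samuel multiplicity, combined with $\eh(\underline{y};R/P) = 1$ (because $\underline{y}$ is a regular system of parameters for the regular local ring $R/P$), evaluates the rightmost expression as $p^{e(d-h)}\length(R_P/P^{[p^e]}R_P)$, establishing the inequality.

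Now set $D(e) := \length(R/\mf m^{[p^e]}) - p^{e(d-h)}\length(R_P/P^{[p^e]}R_P) \geq 0$. Hypothesis (1) says $D(e)/p^{ed} \to 0$, and the task is to force $D(e) = 0$ for every $e$. My plan is to prove that the rescaled defect $D(e)/p^{ed}$ is a nondecreasing sequence in $e$; a nonnegative nondecreasing sequence converging to zero is identically zero, and this would immediately give (2). Proving this monotonicity is the main obstacle, and is exactly where the weakly $F$-regular hypothesis must enter: I would try to compare $D(e+1)$ with $p^d\cdot D(e)$ through the Frobenius, using that $\length_R(F_*M) = p^d\length_R(M)$ for finite-length $M$ and exploiting that in a weakly $F$-regular ring parameter ideals (in particular the image of $(\underline{y}^{p^e})$ in $R/P^{[p^e]}$) are tightly closed, hence stable under Frobenius in the relevant quantitative sense. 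An alternative route, avoiding monotonicity, would be to apply the Lech-type associativity formula of \cite[Proposition~5.4]{equi} directly to $(P,\underline{y})$ in order to recast $D(e)/p^{ed}$ as an averaged defect between two Hilbert--Kunz-type limits that coincide under (1), forcing pointwise vanishing.
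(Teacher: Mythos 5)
Your reduction is set up correctly: the direction $(2)\Rightarrow(1)$ is indeed immediate, and your pointwise inequality is right — since $R/P$ is regular you may write $\mf m = P + (\underline{y})$, so $\mf m^{[p^e]} = P^{[p^e]} + (\underline{y})^{[p^e]}$, and Serre's bound $\length(R/\mf m^{[p^e]}) \geq \eh(\underline{y}^{p^e}; R/P^{[p^e]}) = p^{e(d-h)}\length(R_P/P^{[p^e]}R_P)$ follows from the associativity formula exactly as you say (equality at a given $e$ being equivalent to $\underline{y}$ being a regular sequence on $R/P^{[p^e]}$). Note, for calibration, that the paper does not prove this statement at all: it is quoted from \cite[Corollary~5.16]{equi}, and the paper's own contribution (Theorem~\ref{Theorem Hilbert--Kunz the same}) replaces the whole analysis by a summand-counting argument in $F^e_*R$, which needs strong $F$-regularity but no hypothesis on $R/P$.

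The genuine gap is the step you yourself flag: nothing in your proposal actually forces $D(e)=0$ from $D(e)/p^{ed}\to 0$. The claimed monotonicity of $D(e)/p^{ed}$ in $e$ is unsupported, and I see no mechanism for it: the natural Frobenius comparisons (embedding $R^{\oplus \rank F_*R}$ into $F_*R$ or vice versa, as in the proof of Lemma~\ref{Technical lemma}) only compare $\length(R/\mf m^{[p^{e+1}]})$ with $p^d\length(R/\mf m^{[p^e]})$ up to torsion correction terms of order $O(p^{e(d-1)})$, which is exactly the same order as $D(e)$ itself, so no exact inequality $D(e+1)\geq p^d D(e)$ comes out of this; the individual normalized terms are not even known to be monotone in this generality, let alone their difference. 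The "tightly closed parameter ideals" remark does not by itself produce the needed comparison either — tight closure enters in \cite{equi} in a quite different way, via colon-capturing statements for the ideals $P^{[p^e]}+(\underline{y})^{[p^e]}$ which, under weak $F$-regularity, upgrade containments in tight closures to equalities of ideals and hence of colengths; that is the "careful and challenging analysis" the paper alludes to, and it is absent from your sketch. Your alternative route through the Hilbert--Kunz associativity formula \cite[Proposition~5.4]{equi} has the same defect one level up: it identifies the limit $\lim_n n^{-(d-h)}\ehk(P+(\underline{y}^{\,n}))$ with $\ehk(R_P)$ and bounds each term by $\ehk(R)$, but equality of these multiplicities (which are themselves limits over $e$) does not yield the colength equality at each finite $e$ without precisely the tight-closure descent you have not supplied. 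So as written the proposal proves only the inequality $\length(R/\mf m^{[p^e]})/p^{ed} \geq \length(R_P/P^{[p^e]}R_P)/p^{e\height(P)}$, not the equivalence.
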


The techniques surrounding Theorem~\ref{SmirnovEquiTheorem} involve a careful and challenging analysis of the behavior of the ideals $\{P^{[p^e]}\}$ and $\{(P,\underline{x})^{[p^e]}\}$ where $\underline{x}$ is a regular system of parameters modulo $P$. Using elementary techniques, we recover the above theorem without assuming $R/P$ is a regular local ring, but we do replace the assumption of weakly $F$-regular with the conjecturally equivalent assumption that $R$ is strongly $F$-regular. Our techniques stem from a novel, yet simple, observation that if a module $M$ is a direct summand of $F^{e_0}_*R$ for some $e_0\in\mathbb{N}$ and $R$ is strongly $F$-regular, then asymptotically there will be many direct summands of $F^e_*R$ isomorphic to $M$ as $e\to \infty$. To make this precise, we begin with some notation.

\begin{notation}
Let $R$ be a ring and $N \subseteq M$ be finitely generated $R$-modules. 
Let $\srk_N (M)$ denote the maximal number of $N$-summands appearing in all possible direct sum decompositions of $M$.
\end{notation}

The following lemma should be compared with \cite[Proposition~3.3.1]{SmithVanDenBergh}.

\begin{lemma}\label{summand lemma}
Let $(R,\mathfrak{m})$ be an $F$-finite and strongly $F$-regular local ring. Suppose $M$ is a finitely generated $R$-module such that $\srk_M (F_*^{e_0}R) > 0$ for some $e_0\in \mathbb{N}$. Then 
\[
\liminf_{e\to \infty}\frac{\srk_M (F_*^e R)}{\rank(F^e_*R)}>0.
\]
\end{lemma}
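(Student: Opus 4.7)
The plan is to transport the many free summands of $F^e_*R$, whose count $a_e(R)$ grows like $\fsig(R)\rank(F^e_*R)$, into $M$-summands of $F^{e+e_0}_*R$ by applying the functor $F^{e_0}_*$ to a maximal free decomposition of $F^e_*R$. Strong $F$-regularity of $R$ is used exactly to guarantee $\fsig(R)>0$ (Aberbach--Leuschke), and this positivity is what will get passed through the construction.

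First, use the hypothesis to fix a decomposition $F^{e_0}_*R \cong M \oplus N$ for some $R$-module $N$. For each $e\in\N$, also write $F^e_*R \cong R^{\oplus a_e(R)}\oplus R'_e$, where $R'_e$ absorbs the non-free part of $F^e_*R$. Applying the exact functor $F^{e_0}_*$ to the second decomposition and then substituting the first, one obtains
\[
F^{e+e_0}_*R \;\cong\; F^{e_0}_*\!\left(F^e_*R\right) \;\cong\; (F^{e_0}_*R)^{\oplus a_e(R)} \oplus F^{e_0}_*R'_e \;\cong\; M^{\oplus a_e(R)} \oplus N^{\oplus a_e(R)} \oplus F^{e_0}_*R'_e,
\]
which yields the key lower bound $\srk_M(F^{e+e_0}_*R)\geq a_e(R)$.

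Next I would normalize by $\rank(F^{e+e_0}_*R)$. Because $R$ is a strongly $F$-regular local ring it is a domain, and the $F$-finiteness of $R$ together with the Frobenius isomorphism $[k^{p^e}:k^{p^{e+e_0}}]=[k:k^{p^{e_0}}]$ yields the multiplicativity $\rank(F^{e+e_0}_*R)=\rank(F^e_*R)\cdot\rank(F^{e_0}_*R)$. Combining with the previous inequality produces
\[
\frac{\srk_M(F^{e+e_0}_*R)}{\rank(F^{e+e_0}_*R)} \;\geq\; \frac{1}{\rank(F^{e_0}_*R)}\cdot\frac{a_e(R)}{\rank(F^e_*R)}.
\]

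Finally, since $R$ is strongly $F$-regular, $\lim_{e\to\infty}a_e(R)/\rank(F^e_*R)=\fsig(R)>0$. Reindexing by $e'=e+e_0$ (which does not affect a liminf as $e'\to\infty$), one concludes
\[
\liminf_{e\to\infty}\frac{\srk_M(F^e_*R)}{\rank(F^e_*R)} \;\geq\; \frac{\fsig(R)}{\rank(F^{e_0}_*R)} \;>\; 0.
\]
The main obstacle is conceptual rather than technical: one has to notice that the right move is to feed the \emph{free} decomposition of $F^e_*R$ through the functor $F^{e_0}_*$, so that every free summand becomes a copy of $F^{e_0}_*R = M\oplus N$ and hence contributes an $M$-summand. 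After that, the proof is just rank bookkeeping plus the Aberbach--Leuschke characterization of strong $F$-regularity.
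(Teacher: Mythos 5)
Your proof is correct and follows essentially the same route as the paper: decompose $F^{e_0}_*R\cong M\oplus N$, push the maximal free decomposition $F^e_*R\cong R^{\oplus a_e(R)}\oplus R'_e$ through $F^{e_0}_*$ to get $\srk_M(F^{e+e_0}_*R)\geq a_e(R)$, and divide by the (multiplicative) rank, using $\fsig(R)>0$ from strong $F$-regularity. The only cosmetic difference is that you spell out the rank multiplicativity and the Aberbach--Leuschke input explicitly, which the paper leaves implicit.
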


\begin{proof}
 Suppose that $F^{e_0}_*R\cong M\oplus N$. For each $e\in \mathbb{N}$ write $F^e_*R\cong R^{\oplus a_e(R)}\oplus M_e$. Then $F^{e+e_0}_*R\cong F^{e_0}_*R^{\oplus a_e(R)}\oplus F^{e_0}_*M_e$ and it follows that $M^{\oplus a_e(R)}$ is a direct summand of $F^{e+e_0}_*R$. In particular, $\srk_M (F_*^{e+e_0} R)\geq a_e(R)$ and therefore
 \[
 \liminf_{e\to \infty}\frac{\srk_M (F_*^eR)}{\rank(F^e_*R)}\geq \liminf_{e\to \infty}\frac{a_{e-e_0}(R)}{\rank(F^e_*R)}=\frac{\fsig(R)}{\rank(F^{e_0}_*R)}>0.
 \]
\end{proof}

\subsection{$F$-signature and splitting ideals}

We are prepared to present a proof of Theorem~\ref{Main equimultiplicity theorem} for $F$-signature. But first:

\begin{remark}\label{remark free summands}
To make full use of Lemma~\ref{summand lemma} in the following theorem we remind the reader that the maximal rank of a free summand of a finitely generated module $M$ over a local ring $R$ is invariant of a choice of a direct sum decomposition.  This is because $R$ is a direct summand of $M$ if and only if $\hat{R}$ is a direct summand of $\hat{M}$ and a complete local ring satisfies the Krull--Schmidt condition.
\end{remark}

\begin{theorem}\label{theorem signature the same}
Let $(R,\mathfrak{m})$ be a strongly $F$-regular and $F$-finite local ring. Suppose $P\subset R$ is a prime ideal. Then $\fsig(R)=\fsig(R_P)$ if and only if $a_e(R)=a_e(R_P)$ for every $e\in \mathbb{N}$.
\end{theorem}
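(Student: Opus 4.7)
The implication $(\Leftarrow)$ is essentially formal: strong $F$-regularity forces $R$ to be a local domain with fraction field $K$, so $\rank_R(F^e_*R)=[K:K^{p^e}]=\rank_{R_P}(F^e_*R_P)$ for every prime $P$, and hence the termwise equality $a_e(R)=a_e(R_P)$ immediately yields equality of the limiting ratios defining $\fsig(R)$ and $\fsig(R_P)$.

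The real content lies in $(\Rightarrow)$. Since a free $R$-summand of $F^e_*R$ localizes to a free $R_P$-summand of $F^e_*R_P$, one always has $a_e(R)\le a_e(R_P)$. I would argue by contradiction: assume some $e_0$ satisfies $a_{e_0}(R)<a_{e_0}(R_P)$ and write $F^{e_0}_*R\cong R^{a_{e_0}(R)}\oplus N$ with $N$ having no free $R$-summand. The strict inequality at $e_0$ forces $\frk(N_P)\ge 1$, where $\frk$ denotes the maximal rank of a free direct summand. Feeding $N$ into Lemma~\ref{summand lemma} then produces a constant $c>0$ with $s_e := \srk_N(F^e_*R)\ge c\cdot\rank(F^e_*R)$ for all $e\gg 0$.

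For each such $e$, fix a decomposition $F^e_*R\cong N^{s_e}\oplus Q$. By Remark~\ref{remark free summands}, $\frk$ is invariant under completion, and Krull--Schmidt on the complete local rings $\hat R$ and $\widehat{R_P}$ makes $\frk$ additive over finite direct sums. Since $\frk(N)=0$, comparing this decomposition to $F^e_*R\cong R^{a_e(R)}\oplus(\text{non-free})$ forces $\frk(Q)=a_e(R)$. Localizing at $P$ and using additivity of $\frk$ together with $\frk(N_P)\ge 1$ and the elementary bound $\frk(Q_P)\ge\frk(Q)$ then yields
\[
a_e(R_P) \;=\; s_e\cdot\frk(N_P)+\frk(Q_P) \;\ge\; s_e+a_e(R).
\]
Dividing by $\rank(F^e_*R)=\rank(F^e_*R_P)$ and letting $e\to\infty$ gives $\fsig(R_P)\ge\fsig(R)+c>\fsig(R)$, contradicting the hypothesis. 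The delicate step is the additivity and localization-stability of $\frk$, which is unavailable over $R$ or $R_P$ directly and must be obtained by passing to completions as in Remark~\ref{remark free summands}; once that is in hand, Lemma~\ref{summand lemma} does the heavy lifting by converting the single defect $\frk(N_P)\ge 1$ into a macroscopic gap between $\fsig(R_P)$ and $\fsig(R)$.
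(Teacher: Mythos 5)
Your proof is correct and follows essentially the same strategy as the paper: decompose $F^{e_0}_*R\cong R^{\oplus a_{e_0}(R)}\oplus N$ with $N$ having no free summand but $N_P$ having one, feed $N$ into Lemma~\ref{summand lemma} to get asymptotically many copies of $N$ inside $F^e_*R$, and localize to produce a macroscopic gap between $a_e(R_P)$ and $a_e(R)$. The only presentational difference is that you invoke additivity of $\frk$ (via Krull--Schmidt over completions) to derive $\frk(Q)=a_e(R)$ and the identity $a_e(R_P)=s_e\cdot\frk(N_P)+\frk(Q_P)$, whereas the paper more tersely writes a simultaneous direct-sum decomposition $F^e_*R\cong R^{\oplus a_e(R)}\oplus M_{e_0}^{\oplus s_e}\oplus N_e$ and reads off the inequality $a_e(R_P)\geq a_e(R)+s_e$ directly; both rest on the same Krull--Schmidt justification from Remark~\ref{remark free summands}, and in fact you only need the inequality $a_e(R_P)\geq s_e\cdot\frk(N_P)+\frk(Q_P)$, not the equality, to close the argument.
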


\begin{proof}
If $a_e(R)=a_e(R_P)$ for every $e\in \NN$ then it is trivial that $\fsig(R)=\fsig(R_P)$ since $\rank_R(F^e_*R)=\rank_{R_P}(F^e_*R_P)$.

 Suppose that $a_{e_0}(R_P)>a_{e_0}(R)$ and write $F^{e_0}_*R\cong R^{\oplus a_{e_0}}\oplus M_{e_0}$. Then $(M_{e_0})_P$ has a free $R_P$-summand. For each $e\in \mathbb{N}$ by Remark~\ref{remark free summands} we may write 
 \[F^e_*R\cong R^{\oplus a_e(R)}\oplus M_{e_0}^{\oplus \srk_{M_{e_0}}(F_*^e R)}\oplus N_e.\] 
 Localizing at the prime $P$ we see that $a_e(R_P)\geq a_e(R)+\srk_{M_{e_0}}(F_*^e R)$ and
\begin{eqnarray*}
\displaystyle \fsig(R_P)=\lim_{e\to \infty}\frac{a_e(R_P)}{\rank(F^e_*R)}& \displaystyle \geq \lim_{e\to \infty}\frac{a_e(R)}{\rank(F^e_*R)}+\liminf_{e\to \infty}\frac{\srk_{M_{e_0}}(F_*^e R)}{\rank(F^e_*R)}\\
 &\displaystyle =\fsig(R)+\liminf_{e\to \infty}\frac{\srk_{M_{e_0}}(F_*^e R)}{\rank(F^e_*R)}.
\end{eqnarray*} 
 Therefore $\fsig(R_P)>\fsig(R)$ by Lemma~\ref{summand lemma}.
\end{proof}

The following theorem states that the splitting ideals of $R$ and that of a localization $R_P$ can be effectively compared whenever the Frobenius splitting numbers of $R$ and $R_P$ agree.

\begin{theorem}\label{theorem splitting numbers same}
Let $(R,\fm)$ be an $F$-finite local ring of prime characteristic $p>0$, $P$ be a prime ideal.
Then the following are equivalent:
\begin{enumerate}
    \item $a_e(R)=a_e(R_P)$,
    \item $I_e((P, I))=I_e(P)+I^{[p^e]}$ for all ideals $I$,
    \item $I_e(\fm)=I_e(P)+\fm^{[p^e]}$.
\end{enumerate}
\end{theorem}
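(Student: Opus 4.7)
The plan is to exploit a direct-sum decomposition $F^e_*R \cong R^{a_e(R)} \oplus M_e$ with $M_e$ having no free $R$-summand (such a decomposition exists by Remark~\ref{remark free summands}). The absence of free summands is equivalent, in the local case, to the condition that the ideal $J(m) := \{\varphi(m) : \varphi \in \Hom_R(M_e, R)\}$ is contained in $\fm$ for every $m \in M_e$. Writing $F^e_*r = (f, m)$ under this identification, I first record two translations that drive the whole argument: for any ideal $\fa \subseteq R$, membership $r \in I_e(\fa)$ is equivalent to every component of $f$ lying in $\fa$ together with $J(m) \subseteq \fa$ (by testing against maps supported on each summand), whereas $r \in \fa^{[p^e]}$ is equivalent to every component of $f$ lying in $\fa$ together with $m \in \fa M_e$.

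Introduce the submodule $N := \{m \in M_e : J(m) \subseteq P\}$. The key localization observation is that $(M_e)_P$ admits a free $R_P$-summand precisely when some $\varphi(m) \notin P$; combined with the localized decomposition $F^e_*R_P \cong R_P^{a_e(R)} \oplus (M_e)_P$, this shows that condition (1) translates exactly to $M_e = N$. With this equivalence, implication (1) $\Rightarrow$ (2) is a direct computation: given $r \in I_e((P, I))$, write each component of $f$ as $p_j + i_j$ with $p_j \in P$ and $i_j \in I$, and split $r = r_1 + r_2$ accordingly, placing all of $m$ on the $I_e(P)$-side (permitted since $J(m) \subseteq P$ automatically by (1)). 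The reverse containment follows from monotonicity in Lemma~\ref{Basic properties}. Implication (2) $\Rightarrow$ (3) is immediate by taking $I = \fm$.

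The main obstacle is (3) $\Rightarrow$ (1), and the key move is a Nakayama-style argument. Suppose $M_e \neq N$; since $M_e/N$ is finitely generated and nonzero, Nakayama yields $M_e \neq N + \fm M_e$. Pick $m_0 \in M_e \setminus (N + \fm M_e)$ and let $r \in R$ correspond to $(0, m_0)$. Then $r \in I_e(\fm)$ since $J(m_0) \subseteq \fm$ (by absence of free summands in $M_e$), yet any decomposition $r = r_1 + r_2$ with $r_1 \in I_e(P)$ and $r_2 \in \fm^{[p^e]}$ would force $m_0 = m_1 + m_2$ in $M_e$ with $m_1 \in N$ and $m_2 \in \fm M_e$, contradicting the choice of $m_0$. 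Checking that the $f$-component (namely $0$) does not give any extra flexibility in this putative decomposition is the delicate bookkeeping step at the heart of the proof.
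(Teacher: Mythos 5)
Your argument is correct and follows essentially the same route as the paper: both decompose $F^e_*R\cong R^{\oplus a_e(R)}\oplus M_e$, identify condition (1) with $M_e=N$ (equivalently $\Hom_R(M_e,R)=\Hom_R(M_e,P)$), and invoke Nakayama on $M_e/N$ for the implication (3)$\Rightarrow$(1); the paper phrases that implication directly (getting $M_e=N+\fm M_e$ and concluding $M_e=N$) while you argue the contrapositive with an explicit $m_0\in M_e\setminus(N+\fm M_e)$. The ``delicate bookkeeping'' you flag at the end is actually automatic: since $R^{\oplus a_e(R)}\oplus M_e$ is a direct sum, $(f_1,m_1)+(f_2,m_2)=(0,m_0)$ forces $m_1+m_2=m_0$ regardless of the free components, so the contradiction goes through with no further work.
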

\begin{proof}
Write $F^e_*R\cong R^{\oplus a_e(R)}\oplus M_e$. By definition, $F^e_*I_e(P)=P^{\oplus a_e(R)}\oplus \{\eta\in M_e\mid \varphi(\eta)\in P,\,\forall\varphi\in \Hom_R(M_e,R)\}$. Hence, $a_e(R)=a_e(R_P)$ if and only if $\Hom_R (M_e, R) = \Hom_R (M_e, P)$. 
It follows then that $\Hom_R (M_e, R) = \Hom_R (M_e, P + I)$, so 
$F^e_*(I_e(P)+I^{[p^e]})=(P,I)^{\oplus a_e(R)}\oplus (M_e+IM_e)=(P,I)^{\oplus a_e(R)}\oplus M_e=F^e_*I_e(P + I)$. Thus (1) implies (2).

Since (2) trivially implies (3) it is left to show that the last condition implies the first.
Suppose that $I_e(\fm)=(I_e(P),\fm^{[p^e]})$. Then
\[
F_*I_e(\mf m) = 
\fm^{\oplus a_e(R)}\oplus M_e=\fm^{\oplus a_e(R)}\oplus \left( \{\eta\in M_e\mid \varphi(\eta)\in P,\,\forall\varphi\in \Hom_R(M_e,R)\} + \fm M_e\right).
\]
By Nakayama's lemma we then get that 
\[
M_e= \{\eta\in M_e\mid \varphi(\eta)\in P,\,\forall\varphi\in \Hom_R(M_e,R)\},
\]
i.e., $\Hom_R(M_e,R)=\Hom_R(M_e,P)$ and therefore $a_e(R)=a_e(R_P)$.
\end{proof}

Theorem~\ref{theorem signature the same} and Theorem~\ref{theorem splitting numbers same} imply the following:

\begin{corollary}\label{Corollary $F$-sig same ideals} Let $(R,\fm)$ be an $F$-finite and strongly $F$-regular local ring of prime characteristic $p>0$ and $P$ be a prime ideal. Then $\fsig(R)=\fsig(R_P)$ if and only if 
$ I_e(\fm)=I_e(P)+\fm^{[p^e]}$ for every $e\in \NN$.
\end{corollary}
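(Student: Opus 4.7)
The plan is simply to chain together the two preceding theorems, since the corollary is pitched as a direct consequence of both. First, I would invoke Theorem~\ref{theorem signature the same} to convert the equality of $F$-signatures $\fsig(R)=\fsig(R_P)$ into the statement that $a_e(R)=a_e(R_P)$ for every $e\in\NN$. This uses the strongly $F$-regular hypothesis in an essential way (via Lemma~\ref{summand lemma}).

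Next, I would apply Theorem~\ref{theorem splitting numbers same}, which is a purely $F$-finite statement, to trade the equality of Frobenius splitting numbers for the ideal identity $I_e(\fm)=I_e(P)+\fm^{[p^e]}$; the relevant equivalence is between conditions (1) and (3) of that theorem, so one can just read off the desired identity directly (no need to specialize the more general formulation in condition (2)). Composing the two biconditionals produces the claimed equivalence.

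Because both input theorems have already been proven and the hypotheses of the corollary (strongly $F$-regular, $F$-finite, local) supply the hypotheses required by each, there is essentially no new content to verify, and hence no substantial obstacle. The only mild care needed is to confirm that the equivalences range over \emph{all} $e\in\NN$ in a compatible way: Theorem~\ref{theorem signature the same} states $\fsig(R)=\fsig(R_P)$ is equivalent to the splitting numbers matching for \emph{every} $e$, and Theorem~\ref{theorem splitting numbers same} turns each such equality into the ideal identity for the same $e$, so the quantifiers line up without any further argument.
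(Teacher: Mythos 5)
Your proof is correct and takes exactly the route the paper intends: the paper presents this corollary as an immediate consequence of Theorem~\ref{theorem signature the same} and Theorem~\ref{theorem splitting numbers same}, which is precisely the chain of biconditionals you spell out, with the quantifier-matching observation handled correctly.
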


The techniques surrounding Theorem~\ref{theorem signature the same} provide a novel proof that the $F$-signature of a local ring is $1$ if and only if $R$ is a regular local ring. 

\begin{theorem}[{\cite[Corollary~16]{HunekeLeuschke}}]\label{theorem signature 1 iff regular} Let $(R,\fm)$ be an $F$-finite local ring of prime characteristic $p>0$. Then $\fsig(R)=1$ if and only if $R$ is a regular local ring.
\end{theorem}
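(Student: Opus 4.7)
My plan is to handle the two implications separately. The direction ``$R$ regular implies $\fsig(R)=1$'' is immediate from Kunz's Theorem~\ref{theorem Kunz}: freeness of $F^e_*R$ forces $a_e(R) = \rank(F^e_*R)$, hence the defining ratios are identically $1$.

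For the converse I would start by upgrading the hypothesis. The Aberbach--Leuschke equivalence between $\fsig(R) > 0$ and strong $F$-regularity (the [AberbachLeuschke] reference in the introduction) together with $\fsig(R) = 1$ promotes the assumption to strong $F$-regularity; in particular $R$ is a domain, so every nonzero finitely generated $R$-module has positive rank. For each $e \in \N$ decompose $F^e_*R \cong R^{\oplus a_e(R)} \oplus M_e$ with $M_e$ having no free summand, which is legitimate after passing to $\hat R$ by Remark~\ref{remark free summands}. The goal is to show $M_{e_0} = 0$ for some $e_0 \geq 1$, for then $F^{e_0}_*R$ is free and Kunz's theorem yields regularity.

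The key step is to assume $M_{e_0} \neq 0$ for some $e_0$ and derive $\fsig(R) < 1$. Since $M_{e_0}$ is a nonzero direct summand of the torsion-free module $F^{e_0}_*R$ over a domain, $\rank(M_{e_0}) \geq 1$. Applying Lemma~\ref{summand lemma} to $M_{e_0}$ gives
\[
c := \liminf_{e \to \infty}\frac{\srk_{M_{e_0}}(F^e_*R)}{\rank(F^e_*R)} > 0.
\]
Mirroring the decomposition used inside the proof of Theorem~\ref{theorem signature the same}, for each $e$ I would write
\[
F^e_*R \cong R^{\oplus a_e(R)} \oplus M_{e_0}^{\oplus \srk_{M_{e_0}}(F^e_*R)} \oplus N_e,
\]
which is permissible because $M_{e_0}$ has no free summand and Krull--Schmidt on $\hat R$ separates the $R$-summand statistic from the $M_{e_0}$-summand statistic. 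Counting ranks, dividing by $\rank(F^e_*R)$, and taking $\liminf$ yields
\[
1 \geq \fsig(R) + \rank(M_{e_0}) \cdot c > \fsig(R),
\]
contradicting $\fsig(R) = 1$.

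The conceptual heart of the argument is Lemma~\ref{summand lemma}, which promotes a single occurrence of a non-free summand into a positive asymptotic density of occurrences; the rest is a rank count. The one spot demanding care, which I expect to be the main subtlety, is the legitimacy of the simultaneous extraction of $R$-summands and $M_{e_0}$-summands in a single decomposition of $F^e_*R$, but this is routinely justified via Krull--Schmidt on the completion (Remark~\ref{remark free summands}), exactly as the authors have already used in Theorem~\ref{theorem signature the same}.
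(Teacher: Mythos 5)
Your proof is correct and is essentially the paper's argument with Theorem~\ref{theorem signature the same} unfolded rather than cited: the paper observes that $R_{(0)}$ is a field (so $\fsig(R_{(0)}) = 1 = \fsig(R)$), invokes that theorem at $P = (0)$ to conclude $a_e(R) = a_e(R_{(0)}) = \rank(F^e_*R)$ for every $e$, and finishes by Kunz; you re-derive the key estimate inside that theorem via a rank count on the decomposition $F^e_*R \cong R^{\oplus a_e(R)} \oplus M_{e_0}^{\oplus \srk_{M_{e_0}}(F^e_*R)} \oplus N_e$, powered by the same pair of inputs, Lemma~\ref{summand lemma} and Krull--Schmidt on the completion (Remark~\ref{remark free summands}). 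The only cosmetic difference is that the factor $\rank(M_{e_0})$ in your closing inequality is superfluous --- the argument needs only $\rank(M_{e_0}) \geq 1$, which is how the paper's version of the estimate in Theorem~\ref{theorem signature the same} runs.
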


\begin{proof}
Having positive $F$-signature implies $R$ is strongly $F$-regular.\footnote{The converse also holds, see \cite[Main Theorem]{AberbachLeuschke}.} Hence, $R$ is a domain, so $R_{(0)}$ is a regular ring 
and, therefore, $\fsig(R_{(0)}) = \fsig(R)$.
Invoking Theorem~\ref{theorem signature the same} 
we then have that $a_e(R)=a_e(R_{(0)})=\rank(F^e_*R)$. Therefore $F^e_*R$ is a free $R$-module and $R$ is a regular local ring by Theorem~\ref{theorem Kunz}.
\end{proof}

The advantage of the proof of Theorem~\ref{theorem signature 1 iff regular} 
is that it directly uses Kunz's Theorem while the proof of \cite[Corollary~16]{HunekeLeuschke} invokes the fact that $R$ must be regular if $\ehk(R) = 1$ (\cite{WatanabeYoshida, HunekeYao}). We may also adapt our approach to give a somewhat novel proof that Hilbert--Kunz multiplicity of a formally unmixed local ring is $1$ if and only if $R$ is a regular local ring, see Theorem~\ref{Theorem Hilbert--Kunz the same} below.


\begin{theorem}
\label{Theorem depth R/P vs depth R/I_e(P)}
Let $(R,\mf m)$ be an $F$-finite and strongly $F$-regular local ring of prime characteristic $p>0$. Suppose that $P\in \Spec R$, $\fsig(R)=\fsig(R_P)$, and $\underline{x}=x_1,\ldots,x_h$ is a sequence of elements in $R$. Then the following are equivalent:
\begin{enumerate}
\item $\underline{x}$ is a regular sequence on $R/P$;
\item $\underline{x}$ is a regular sequence on $R/I_e(P)$ for each $e\in \N$;
\item $\underline{x}$ is a regular sequence on $R/I_e(P)$ for some $e\in \NN$.
\end{enumerate}
In particular, $\depth (R/P)=\depth (R/I_e(P))$ for all $e\in \mathbb{N}$.
\end{theorem}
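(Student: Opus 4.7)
The plan is to prove $(1) \Rightarrow (2) \Rightarrow (3) \Rightarrow (1)$ by induction on $h$, the implication $(2) \Rightarrow (3)$ being trivial. The hypothesis $\fsig(R) = \fsig(R_P)$ feeds into Theorem~\ref{theorem signature the same} and Theorem~\ref{theorem splitting numbers same} to yield the identity $I_e((P, I)) = I_e(P) + I^{[p^e]}$ for every ideal $I$; iterating gives
\[
I_e((P, x_1, \ldots, x_{k})) = I_e(P) + (x_1^{p^e}, \ldots, x_{k}^{p^e})
\]
for every $0 \leq k \leq h$. I will also repeatedly rely on the classical fact that $x_1, \ldots, x_h$ is regular on a finitely generated module if and only if $x_1^{n_1}, \ldots, x_h^{n_h}$ is, for any positive integers $n_i$.

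For $(1) \Rightarrow (2)$, the base case $h=1$ is exactly Lemma~\ref{Basic properties}(9). Inductively, assume the conclusion for length-$(h-1)$ sequences, so that $x_1, \ldots, x_{h-1}$ is regular on $R/I_e(P)$. Since $x_h$ is regular on $R/(P, x_1, \ldots, x_{h-1})$ by hypothesis, Lemma~\ref{Basic properties}(9) combined with the iterated formula above shows that $x_h$ is regular on $R/(I_e(P), x_1^{p^e}, \ldots, x_{h-1}^{p^e})$. Hence $x_1^{p^e}, \ldots, x_h^{p^e}$ is regular on $R/I_e(P)$, and therefore so is $x_1, \ldots, x_h$.

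For $(3) \Rightarrow (1)$, the base case $h=1$ uses that $I_e(P)$ is $P$-primary by Lemma~\ref{Basic properties}(8), so any nonzerodivisor modulo $I_e(P)$ must avoid $P$. For the inductive step, set $\fa = (P, x_1, \ldots, x_{h-1})$ and $\fb = \fa : x_h \supseteq \fa$; by induction $x_1, \ldots, x_{h-1}$ is regular on $R/P$, and the goal reduces to $\fa = \fb$. Lemma~\ref{Basic properties}(7) yields $I_e(\fb) = I_e(\fa) : x_h^{p^e}$, while the regularity of $x_1^{p^e}, \ldots, x_h^{p^e}$ on $R/I_e(P)$ gives $I_e(\fa) : x_h^{p^e} = I_e(\fa)$. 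Therefore $I_e(\fa) = I_e(\fb)$, and since $R$ is $F$-pure, Corollary~\ref{cor pos diff} forces $\fa = \fb$. The depth equality is then immediate: a maximal $R/P$-regular sequence in $\fm$ has length $\depth(R/P)$ and, by the equivalence $(1) \Leftrightarrow (3)$, is also a maximal $R/I_e(P)$-regular sequence.

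The main technical subtlety is that Lemma~\ref{Basic properties}(9) and the Frobenius colon formula naturally produce the $p^e$-powers $x_i^{p^e}$ rather than the $x_i$ themselves; the classical powers-of-a-regular-sequence lemma bridges this gap in both directions. The second crucial ingredient is Corollary~\ref{cor pos diff}, which converts the splitting-ideal equality $I_e(\fa) = I_e(\fb)$ back to the equality $\fa = \fb$ of the original ideals, and this is the step that genuinely uses the strong $F$-regularity hypothesis via $F$-purity.
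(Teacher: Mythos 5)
Your proof is correct and relies on exactly the same key ingredients as the paper's: the identity $I_e((P,I)) = I_e(P) + I^{[p^e]}$ from Theorems~\ref{theorem signature the same} and~\ref{theorem splitting numbers same}, the colon formula Lemma~\ref{Basic properties}(7), the classical ``powers of a regular sequence'' lemma, and Corollary~\ref{cor pos diff} to descend from $I_e(\fa) = I_e(\fb)$ back to $\fa = \fb$. The only organizational difference is cosmetic: you frame $(1)\Rightarrow(2)$ as an induction on $h$ with Lemma~\ref{Basic properties}(9) doing the work at each step, while the paper checks the colon conditions $(I_e(P),x_1^{p^e},\dots,x_i^{p^e}):x_{i+1}^{p^e}=(I_e(P),x_1^{p^e},\dots,x_i^{p^e})$ directly for each $i$ in one pass; the two are interchangeable, since Lemma~\ref{Basic properties}(9) is itself a consequence of Lemma~\ref{Basic properties}(7). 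Your $(3)\Rightarrow(1)$ step and the derivation of the depth equality match the paper's argument essentially verbatim.
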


\begin{proof}
Let $x_1,\ldots,x_h$ be a regular sequence on $R/P$. To show that $x_1,\ldots, x_h$ is a regular sequence on $R/I_e(P)$ it is equivalent to check that for every $0\leq i\leq h-1$
\[
(I_e(P),x^{p^e}_1,\ldots,x_i^{p^e}):x_{i+1}^{p^e}=(I_e(P),x^{p^e}_1,\ldots,x_i^{p^e}).
\]
By Theorem~\ref{theorem signature the same} and  Theorem~\ref{theorem splitting numbers same} we have that
$
(I_e(P),x^{p^e}_1,\ldots,x_i^{p^e})=I_e(P,x_1,\ldots, x_i)
$
and by (7) of Lemma~\ref{Basic properties} we have that
$
(I_e(P,x_1,\ldots, x_i)):x_{i+1}^{p^e}=I_e((P,x_1,\ldots, x_i):x_{i+1}).
$
But $x_1,\ldots, x_h$ is a regular sequence on $R/P$ and therefore by a second application of Theorem~\ref{theorem signature the same} and Theorem~\ref{theorem splitting numbers same} we see that
\[
I_e((P,x_1,\ldots, x_i):x_{i+1})=I_e(P,x_1,\ldots, x_i)=(I_e(P),x^{p^e}_1,\ldots, x^{p^e}_i).
\]

Now suppose that for some $e\in \mathbb{N}$ that $x_1,\ldots, x_h$ is a regular sequence on $R/I_e(P)$. Then for each $0\leq i\leq h-1$ we have by (7) of Lemma~\ref{Basic properties}, Theorem~\ref{theorem signature the same}, and Theorem~\ref{theorem splitting numbers same} that 
\begin{eqnarray*}
I_e((P,x_1,\ldots,x_i):x_{i+1})&=&(I_e(P,x_1,\ldots,x_i):x^{p^e}_{i+1})=(I_e(P),x_1^{p^e}\ldots, x_{i-1}^{p^e}):x_{i}^{p^e}\\
&=&(I_e(P),x_1^{p^e}\ldots, x_{i-1}^{p^e})=I_e((P,x_1,\ldots,x_{i-1})).
\end{eqnarray*}
By Corollary~\ref{cor pos diff} we must have that
$
(P,x_1,\ldots,x_i):x_{i+1}=(P,x_1,\ldots,x_i)
$
for each $0\leq i\leq h-1$ and therefore $x_1,\ldots,x_h$ is indeed a regular sequence on $R/P$.
\end{proof}

Let $(R,\mf m)$ be an $F$-finite and strongly $F$-regular local ring of prime characteristic $p>0$. Observe by (9) of Lemma~\ref{Basic properties} that $\depth (R/I_e(P))\geq 1 $ for every $e\in \NN$ and $P\in \Spec R\setminus \{\mf m\}$.  However, it does not follow that $\depth (R/I_e(P))=\depth(R/P)$ if we do not assume $\fsig(R)=\fsig(R_P)$.

\begin{example}
Consider the regular local ring $S$ of prime characteristic $2$ obtained by localizing $\mathbb{F}_2[x,y,z,w]$ at the maximal ideal $(x,y,z,w)$ and let $R=S/(xy-zw)$. Then $R$ is a strongly $F$-regular isolated singularity. Consider the height $1$ prime ideal $P=(x,z)$. By the techniques surrounding Fedder's criterion \cite{Fedder}, c.f. \cite[Theorem~2.3]{Glassbrenner}, for each $e\in \N$ we have that
\[
I_e(R)=\frac{P^{[2^e]}:_S(xy-zw)^{2^e-1}}{(xy-zw)}.
\]
Observe that $R/P$ is a regular local ring of dimension $2$, yet one can check that $I_1(R)=( xz, x^2,z^2)$ and $R/I_1(R)$ has depth $1$.
\end{example}

\subsection{Hilbert-Kunz multiplicity} Now, we prove Theorem~\ref{Main equimultiplicity theorem} for Hilbert--Kunz multiplicity.

\begin{theorem}\label{Theorem Hilbert--Kunz the same}
Let $(R,\fm)$ be a strongly $F$-regular and $F$-finite local ring of dimension $d$ and $P\in \Spec R$. Then the following are equivalent:
\begin{enumerate}
    \item $\ehk(R)=\ehk(R_P)$;
    \item $\length(R/\fm^{[p^e]})/p^{ed}=\length(R_P/P^{[p^e]})/p^{e\height(P)}$ for every $e\in \NN$;
    \item $\mu(F^e_*R)=\mu(F^e_*R_P)$ for every $e\in \NN$;
    \item $F^e_*R/PF^e_*R$ is a free $R/P$-module for every $e\in \NN$.
\end{enumerate}  
\end{theorem}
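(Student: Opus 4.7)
The plan is to organize the four conditions into two easy equivalences and one main implication: show (3) $\Leftrightarrow$ (4) and (2) $\Leftrightarrow$ (3) by module-theoretic identities, note that (3) $\Rightarrow$ (1) is trivial (divide by $\rank(F^e_*R)$ and take $e \to \infty$), and then prove (1) $\Rightarrow$ (3) using Lemma~\ref{summand lemma} in the spirit of Theorem~\ref{theorem signature the same}.

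For (3) $\Leftrightarrow$ (4), I would consider the finitely generated $R/P$-module $M := F^e_*R/PF^e_*R$. By Nakayama we have $\mu_{R/P}(M) = \mu_R(F^e_*R)$, and by localizing at the generic point of $R/P$ we get $\rank_{R/P}(M) = \mu(F^e_*R_P)$. A standard argument over the Noetherian local domain $R/P$ shows that a finitely generated module is free if and only if its minimal number of generators equals its generic rank: lift a minimal generating set to a surjection $(R/P)^r \twoheadrightarrow M$; the kernel is torsion by rank considerations and torsion-free by being a submodule of $(R/P)^r$, hence zero. For (2) $\Leftrightarrow$ (3), combining $F^e_*R/\fm F^e_*R = F^e_*(R/\fm^{[p^e]})$ with $\length_R(F^e_*k) = [k:k^{p^e}]$ (by applying $F^e_*$ to a composition series of $R/\fm^{[p^e]}$) yields the per-$e$ formula $\mu(F^e_*R) = [k:k^{p^e}]\,\length(R/\fm^{[p^e]})$. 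Together with Kunz's rank formula $\rank(F^e_*R) = p^{ed}[k:k^{p^e}]$ and its analogue over $R_P$, and the fact that $\rank(F^e_*R) = \rank(F^e_*R_P)$, the ratios in (2) and (3) agree term by term.

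The main step is (1) $\Rightarrow$ (3). Suppose for contradiction that $\mu(F^{e_0}_*R) > \mu(F^{e_0}_*R_P)$ for some $e_0$. Apply Lemma~\ref{summand lemma} with the summand $M = F^{e_0}_*R$, which trivially satisfies $\srk_M(F^{e_0}_*R) \geq 1$: there exists $c > 0$ with $\srk_M(F^e_*R)/\rank(F^e_*R) \geq c$ for $e \gg 0$. Writing $F^e_*R \cong (F^{e_0}_*R)^{s_e} \oplus L_e$ with $s_e := \srk_M(F^e_*R)$, additivity of $\mu$ on direct sums over a local ring, together with the elementary inequality $\mu((L_e)_P) \leq \mu(L_e)$, yields
\[
\mu(F^e_*R) - \mu(F^e_*R_P) \;\geq\; s_e \bigl(\mu(F^{e_0}_*R) - \mu(F^{e_0}_*R_P)\bigr).
\]
Dividing by $\rank(F^e_*R) = \rank(F^e_*R_P)$ and taking $\liminf$ produces $\ehk(R) - \ehk(R_P) \geq c\bigl(\mu(F^{e_0}_*R) - \mu(F^{e_0}_*R_P)\bigr) > 0$, contradicting (1).

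The conceptual step --- and what distinguishes this proof from the approach of \cite{equi} --- is that Lemma~\ref{summand lemma} can be applied directly to $M = F^{e_0}_*R$ itself, bypassing any Krull--Schmidt decomposition into indecomposables: the defect $\mu(F^{e_0}_*R) - \mu(F^{e_0}_*R_P)$ is positive by assumption and is amplified by the number $s_e$ of iterated $F^{e_0}_*R$-summands of $F^e_*R$, producing the required positive gap between the Hilbert--Kunz limits.
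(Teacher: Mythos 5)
Your proposal is correct and takes essentially the same route as the paper: the core is the identical application of Lemma~\ref{summand lemma} to amplify a single defect $\mu(F^{e_0}_*R)>\mu(F^{e_0}_*R_P)$ into a positive gap $\ehk(R)-\ehk(R_P)$ via a decomposition of $F^e_*R$ with asymptotically many copies of a fixed summand, and your (2)$\Leftrightarrow$(3) and (3)$\Leftrightarrow$(4) steps are the same Kunz-rank identity and ``minimal generators equal generic rank implies free'' arguments the paper uses (the paper cites Kunz for the former rather than recomputing it). The only, harmless, deviation is that you apply Lemma~\ref{summand lemma} to $F^{e_0}_*R$ itself rather than to the non-free complement $M_{e_0}$, which lets you bypass the appeal to Remark~\ref{remark free summands}.
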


\begin{proof} Conditions (2) and (3) are equivalent by \cite[Proposition~2.3]{Kunz2} and clearly conditions (2) and (3) imply (1). To show that condition (1) implies condition (3)  suppose that $\mu(F^{e_0}_*R)>\mu(F^{e_0}_*R_P)$. If we write $F^{e_0}_* R\cong R^{\oplus a_{e_0}(R)}\oplus M_{e_0}$ then $\mu(M_{e_0})>\mu((M_{e_0})_P)$. Let $b_e = \srk_{M_{e_0}} (F^e_*R)$. By Remark~\ref{remark free summands} we may write $F^e_*R\cong R^{\oplus a_e} \oplus (M_{e_0})^{\oplus b_e}\oplus N_e$ and it follows that
\[
\ehk(R)=\lim_{e\to \infty}\frac{1}{\rank(F^e_*R)}(a_e(R)+b_e\mu(M_{e_0})+\mu(N_e))
\]
and
\[
\ehk(R_P)=\lim_{e\to \infty}\frac{1}{\rank(F^e_*R)}(a_e(R)+b_e\mu((M_{e_0})_P)+\mu((N_e)_P)).
\]
Therefore
\begin{align*}
\ehk(R_P)&\leq \lim_{e\to \infty}\frac{1}{\rank(F^e_*R)}\left(a_e(R)+b_e(\mu(M_{e_0})-1)+\mu(N_e)\right)\\
&\leq \lim_{e\to \infty}\frac{1}{\rank(F^e_*R)}\left (a_e(R)+b_e\mu(M_{e_0})+\mu(N_e) \right )-\liminf_{e\to \infty}\frac{b_e}{\rank(F^e_*R)}\\
&=\ehk(R)-\liminf_{e\to \infty}\frac{b_e}{\rank(F^e_*R)},
\end{align*}
a value strictly less than $\ehk(R)$ by Lemma~\ref{summand lemma}.

Now suppose that $\ehk(R)=\ehk(R_P)$. To show that $F^e_*R/PF^e_*R$ is a free $R/P$-module observe first that by Nakayama's Lemma, $\mu_{R_P}(F^e_*R_P)=\mu_{R_P}(F^e_*R_P/PF^e_*R_P)$. Therefore
\[
\mu_R(F^e_*R/PF^e_*R)= \mu_R(F^e_*R)=\mu_{R_P}(F^e_*R_P)=\mu_{R_P}(F^e_*R_P/PF^e_*R_P)\leq \mu_R(F^e_*R/PF^e_*R). 
\]
Therefore, as an $R/P$-module, we have that $F^e_*R/PF^e_*R$ is generated by $\rank_{R/P}(F^e_*R/PF^e_*R)$ elements and must be free.

Conversely, if $F^e_*R/PF^e_*R$ is a free $R/P$-module for every $e\in \NN$ then $\mu_R(F^e_*R/PF^e_*R)=\mu_{R_P}(F^e_*R_P/PF^e_*R_P)$ and therefore
\[
\mu_R(F^e_*R)=\mu_R(F^e_*R/PF^e_*R)=\mu_{R_P}(F^e_*R_P/PF^e_*R_P)=\mu_{R_P}(F^e_*R_P).
\]
\end{proof}

The following corollary is the analogue of Theorem~\ref{Theorem depth R/P vs depth R/I_e(P)} for Hilbert--Kunz multiplicity.

\begin{corollary}
\label{corollary to theorem on HK}
Let $(R,\mf m)$ be a strongly $F$-regular and $F$-finite local ring of prime characteristic $p>0$. Suppose that $P\in \Spec R$ and $\ehk(R)=\ehk(R_P)$.  Then for each sequence of elements $\underline{x}=x_1,\ldots,x_h$ the following are equivalent:
\begin{enumerate}
\item $\underline{x}$ is a regular sequence on $R/P$;
\item $\underline{x}$ is a regular sequence on $R/P^{[p^e]}$ for each $e\in \N$;
\item $\underline{x}$ is a regular sequence on $R/P^{[p^e]}$ for some $e\in \N$.
\end{enumerate}
In particular, $\depth(R/P)=\depth(R/P^{[p^e]})$ for every $e\in \N$.
\end{corollary}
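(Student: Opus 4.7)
My plan is to mimic the proof of Theorem~\ref{Theorem depth R/P vs depth R/I_e(P)}, substituting for the splitting-ideal manipulations the freeness statement (4) of Theorem~\ref{Theorem Hilbert--Kunz the same}: under the hypothesis $\ehk(R)=\ehk(R_P)$, the module $F^e_*R/PF^e_*R$ is free over $R/P$ for every $e\in\N$. Identifying $F^e_*R/PF^e_*R\cong F^e_*(R/P^{[p^e]})$ as $R$-modules (on each side the $R$-action factors through $R/P$), the crucial translation is that the twisted $R$-action on $F^e_*(R/P^{[p^e]})$ sends $y\in R$ to multiplication by $y^{p^e}$ on $R/P^{[p^e]}$; hence a sequence $y_1,\ldots,y_h$ is a regular sequence on $F^e_*(R/P^{[p^e]})$ if and only if $y_1^{p^e},\ldots,y_h^{p^e}$ is a regular sequence on $R/P^{[p^e]}$. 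I will also invoke the standard fact that, in a Noetherian local ring, a sequence of elements forms a regular sequence on a finitely generated module if and only if its Frobenius powers do; the same fact is used implicitly in the proof of Theorem~\ref{Theorem depth R/P vs depth R/I_e(P)}.

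With these tools in hand, the equivalences follow immediately. For (1) $\Rightarrow$ (2), if $\underline x$ is regular on $R/P$, then freeness of $F^e_*(R/P^{[p^e]})$ over $R/P$ forces $\underline x$ to be regular on $F^e_*(R/P^{[p^e]})$; the action translation then shows $x_1^{p^e},\ldots,x_h^{p^e}$ is regular on $R/P^{[p^e]}$, and the Frobenius-power fact yields that $\underline x$ itself is. The implication (2) $\Rightarrow$ (3) is trivial. For (3) $\Rightarrow$ (1), run the same chain backwards: regularity of $\underline x$ on $R/P^{[p^e]}$ passes to regularity of $x_1^{p^e},\ldots,x_h^{p^e}$ on $R/P^{[p^e]}$, which by the translation is regularity of $\underline x$ on the nonzero free $R/P$-module $F^e_*(R/P^{[p^e]})$, hence regularity on $R/P$.

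The depth equality then drops out of (1) $\Leftrightarrow$ (2) applied to a maximal regular sequence in $\fm$ on $R/P$, combined with (3) $\Rightarrow$ (1) applied to a maximal regular sequence in $\fm$ on $R/P^{[p^e]}$, giving matching inequalities. I do not foresee a serious obstacle: the argument is essentially bookkeeping once one correctly tracks the Frobenius-twisted action, together with the elementary observation that a sequence is regular on a nonzero free module $(R/P)^n$ precisely when it is regular on $R/P$.
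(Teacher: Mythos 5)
Your proposal is correct and is essentially the paper's own argument: the paper likewise combines part (4) of Theorem~\ref{Theorem Hilbert--Kunz the same} (freeness of $F^e_*(R/P^{[p^e]})\cong F^e_*R/PF^e_*R$ over $R/P$) with the fact that $\underline{x}$ is a regular sequence on a finitely generated module $M$ if and only if it is one on $F^e_*M$. Your write-up merely makes explicit the Frobenius-twisted action and the reduction to regular sequences on a nonzero free $R/P$-module, which the paper leaves implicit.
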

\begin{proof}
For any finitely generated $R$-module $M$ a sequence of elements $\underline{x}$ is a regular sequence on $M$ if and only if $\underline{x}$ is a regular sequence on $F^e_*M$. The corollary is immediate by Theorem~\ref{Theorem Hilbert--Kunz the same} since the modules $F^e_*(R/P^{[p^e]})\cong F^e_*R/PF^e_*R$ are free $R/P$-modules. 
\end{proof}

Corollary~\ref{corollary to theorem on HK} is an improvement of an observation that can be made from \cite[Proposition~3.1 and Corollary~5.19]{equi}: if $(R,\mf m)$ is weakly $F$-regular, $P\in \Spec R$ satisfies $\ehk(R)=\ehk(R_P)$, and $R/P$ is regular then $R/P^{[p^e]}$ is Cohen-Macaulay for $e\in \N$.

We utilize Theorem~\ref{Theorem Hilbert--Kunz the same} and results of \cite{AberbachEnescuLowerBounds} and provide a novel proof that the Hilbert--Kunz multiplicity of a local ring is $1$ if and only if the ring is regular. We recall that a ring is unmixed if it is equidimensional and has no embedded components.

\begin{theorem}[\cite{WatanabeYoshida}] Let $(R,\fm)$ be a formally unmixed local $F$-finite ring of prime characteristic $p>0$. Then $\ehk(R)=1$ if and only if $R$ is a regular local ring.
\end{theorem}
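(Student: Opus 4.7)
The plan is to imitate the novel proof of Theorem~\ref{theorem signature 1 iff regular}, with Theorem~\ref{Theorem Hilbert--Kunz the same} taking the place of Theorem~\ref{theorem signature the same}. The nontrivial direction is that $\ehk(R)=1$ forces $R$ to be regular; the reverse is classical. My first step is to invoke the lower bound results of Aberbach--Enescu \cite{AberbachEnescuLowerBounds}: under the formally unmixed hypothesis, the value $\ehk(R)=1$ falls below their threshold for strong $F$-regularity, and therefore $R$ is strongly $F$-regular. In particular $R$ is a domain, so $(0)\in\Spec R$ and $R_{(0)}$ is the fraction field of $R$, which trivially satisfies $\ehk(R_{(0)})=1=\ehk(R)$.

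Next I would apply Theorem~\ref{Theorem Hilbert--Kunz the same} to the prime ideal $P=(0)$. The equivalence of (1) and (3) yields $\mu(F^e_*R)=\mu(F^e_*R_{(0)})=\rank(F^e_*R)$ for every $e\in\N$. Since $R$ is a domain, $F^e_*R$ is torsion-free with rank equal to its minimal number of generators, so any minimal surjection from $R^{\mu(F^e_*R)}$ onto $F^e_*R$ is forced to be an isomorphism: the kernel lies in the torsion-free module $R^{\mu(F^e_*R)}$ and has rank zero, hence vanishes. Thus $F^e_*R$ is a free $R$-module, and Kunz's Theorem~\ref{theorem Kunz} gives that $R$ is regular.

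The main obstacle is extracting strong $F$-regularity from the sole numerical condition $\ehk(R)=1$. For $F$-signature the analogous step was automatic, since positivity of the $F$-signature already guarantees strong $F$-regularity by \cite{AberbachLeuschke}; for Hilbert--Kunz multiplicity no such direct link is available, and one must pass through the more delicate lower bound estimates of \cite{AberbachEnescuLowerBounds}, which is precisely where the formally unmixed assumption enters. Once this hurdle is cleared, the rest is a direct application of the equimultiplicity machinery developed earlier in this section and is formally parallel to the $F$-signature case.
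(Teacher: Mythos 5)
Your proposal is correct and follows essentially the same route as the paper: invoke Aberbach--Enescu to deduce strong $F$-regularity from $\ehk(R)=1$ under the formally unmixed hypothesis, apply Theorem~\ref{Theorem Hilbert--Kunz the same} at $P=(0)$ to get $\mu(F^e_*R)=\rank(F^e_*R)$, and conclude freeness and hence regularity via Kunz. The only difference is that you spell out the (standard) torsion-free argument showing $\mu=\rank$ forces $F^e_*R$ to be free, a step the paper leaves implicit.
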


\begin{proof}
The assumption on Hilbert--Kunz multiplicity implies that $R$ is strongly $F$-regular, see \cite[Corollary~3.6]{AberbachEnescuLowerBounds}. By Theorem~\ref{Theorem Hilbert--Kunz the same} applied to $P = (0)$, $\mu (F_* R) = \rank F_* R$, so $F_* R$ is a free $R$-module
and $R$ is regular by Theorem~\ref{theorem Kunz}.
\end{proof}

\subsection{Frobenius Betti numbers and Frobenius Euler characteristic} We now turn our attention to the behavior of Frobenius Betti numbers and Frobenius Euler characteristics under localizations.

\begin{definition}
Let $(R,\fm)$ be an $F$-finite local domain of prime characteristic $p>0$. For each $e\in \N$ let $\Omega_i^e(R)$ be the $i$th syzygy in the minimal free resolution of $F^e_*R$.  The $i$th Frobenius Betti number of $R$ is
\[
\beta_i^F(R)=\lim_{e\to \infty}\frac{\mu(\Omega^e_i(R))}{\rank(F^e_*R)}
\]
and the $i$th Frobenius Euler characteristic of $R$ is 
\[
\chi_i^F(R)=\lim_{e\to \infty}\sum_{j=0}^i(-1)^j\frac{\mu(\Omega_{i-j}^e(R))}{\rank(F^e_*R)}=\sum_{j=0}^i(-1)^j\beta_{i-j}^F(R).
\]
\end{definition}

We refer the reader to \cite{Li, AberbachLi, DSHNB, DSPY3} for basics on Frobenius Betti numbers and \cite{DSPY3} for basics on Frobenius Euler characteristic. Our study begins with a simple application of the Auslander--Buchsbaum formula.

\begin{lemma}\label{projective dimension lemma} Let $(R,\fm)$ be a local ring of prime characteristic $p>0$. The following are equivalent:
\begin{enumerate}
\item $R$ is a regular local ring;
\item $F^e_*R$ has finite projective dimension as an $R$-module for every $e\geq 1$;
\item $F^e_*R$ has finite projective dimension for some $e\in\N$.
\end{enumerate}
\end{lemma}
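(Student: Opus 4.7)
The plan is to prove the cycle (1)$\Rightarrow$(2)$\Rightarrow$(3)$\Rightarrow$(1). The forward implications are immediate: a regular local ring has finite global dimension, so every finitely generated $R$-module has finite projective dimension (in particular $F^e_*R$, which is finitely generated under the standing $F$-finite assumption of the paper), and (2) trivially implies (3). All of the content lies in (3)$\Rightarrow$(1), which I would extract from the Auslander--Buchsbaum formula together with Kunz's Theorem~\ref{theorem Kunz}.

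Assume $\mathrm{pd}_R(F^e_*R) < \infty$ for some $e \geq 1$. The Auslander--Buchsbaum formula gives
\[
\mathrm{pd}_R(F^e_*R) + \depth_R(F^e_*R) = \depth R,
\]
so it suffices to establish the identity $\depth_R(F^e_*R) = \depth R$. Once that is in hand, it forces $\mathrm{pd}_R(F^e_*R) = 0$, so $F^e_*R$ is free (being finitely generated projective over a local ring), and Theorem~\ref{theorem Kunz} then identifies $R$ as a regular local ring.

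The depth identity is the only real obstacle, and the cleanest route runs through local cohomology. Since the $R$-module structure on $F^e_*R$ is given by $r \cdot m = r^{p^e} m$, inverting the action of an element $r$ amounts to inverting $r^{p^e}$, so $(F^e_*R)[r^{-1}] \cong F^e_*(R[r^{-1}])$ as $R$-modules. Assembling this identification over a generating set of $\fm$ into the \v{C}ech complex yields a natural isomorphism $H^i_\fm(F^e_*R) \cong F^e_* H^i_\fm(R)$. Because Frobenius pushforward is a faithful functor on the category of $R$-modules (it merely changes the $R$-action on a fixed abelian group), these local cohomology modules vanish simultaneously for each $i$, and the standard characterisation of depth via the vanishing of local cohomology delivers $\depth_R(F^e_*R) = \depth R$, completing the argument.
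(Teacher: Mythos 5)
Your proof is correct and takes essentially the same route as the paper's: both reduce (3)$\Rightarrow$(1) to Auslander--Buchsbaum plus Kunz's theorem via the equality $\depth_R(F^e_*R) = \depth R$. The only difference is that the paper declares the depth equality ``easy to see'' while you supply a local cohomology argument; an equally standard alternative is to note directly that a sequence $\underline{x}$ is regular on $F^e_*R$ if and only if $\underline{x}^{[p^e]}$ (equivalently $\underline{x}$) is regular on $R$.
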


\begin{proof}
It is easy to see that $\depth(R)=\depth(F^e_*R)$ for every $e\in \N$. Hence by the Auslander--Buchsbaum formula, if the projective dimension of $F^e_*R$ is finite then $F^e_*R$ is a free $R$-module and the lemma follows from Theorem~\ref{theorem Kunz}.
\end{proof}

\begin{lemma}\label{rank of syzygy lemma} Let $(R,\fm)$ be an $F$-finite local domain of prime characteristic $p>0$. Then 
\[
\rank_R(\Omega_i^e(R))=\chi_{i-1}^e(R)+(-1)^{i}\rank(F^e_*R).
\]
Moreover, if $R$ is not regular then $\beta_i^e(R)> \rank_R(\Omega_i^e(R))=\chi_{i-1}^e(R)+(-1)^{i}\rank(F^e_*R)$.
\end{lemma}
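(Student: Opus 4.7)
The plan is to prove the rank identity by induction on $i$ using the natural short exact sequences coming from the minimal free resolution of $F^e_*R$, and then deduce the strict inequality by a rank-zero-kernel argument combined with Lemma~\ref{projective dimension lemma}.

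For the equality, note that the minimal free resolution of $F^e_*R$ breaks into short exact sequences
\[
0 \to \Omega_j^e(R) \to R^{\beta_{j-1}^e(R)} \to \Omega_{j-1}^e(R) \to 0
\]
for each $j\geq 1$, with the convention $\Omega_0^e(R) = F^e_*R$. Since $R$ is a domain, rank is additive on short exact sequences, so $\rank(\Omega_j^e(R)) = \beta_{j-1}^e(R) - \rank(\Omega_{j-1}^e(R))$. An immediate induction then gives
\[
\rank(\Omega_i^e(R)) = \sum_{j=0}^{i-1}(-1)^{i-1-j}\beta_j^e(R) + (-1)^i\rank(F^e_*R),
\]
and a routine reindexing $k = i-1-j$ identifies the first sum with $\chi_{i-1}^e(R)=\sum_{k=0}^{i-1}(-1)^k\beta_{i-1-k}^e(R)$.

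For the strict inequality, since $\beta_i^e(R)=\mu(\Omega_i^e(R))\geq \rank(\Omega_i^e(R))$ always holds, I would suppose for contradiction that equality holds for some $i$ and $e$. Then the minimal surjection $R^{\beta_i^e(R)} \twoheadrightarrow \Omega_i^e(R)$ has kernel $\Omega_{i+1}^e(R)$ of rank zero. But $\Omega_{i+1}^e(R)$ is a submodule of a free module over a domain, hence torsion-free, so rank zero forces it to vanish. Thus $\Omega_i^e(R)$ is free and $F^e_*R$ admits a finite free resolution. By Lemma~\ref{projective dimension lemma}, $R$ must be regular, contradicting the assumption.

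The argument is essentially mechanical, so no step is truly an obstacle; the only place requiring some care is bookkeeping the signs so the alternating sum matches the stated definition of $\chi_{i-1}^e(R)$, and ensuring the minimality of the resolution so that $\beta_i^e(R)$ genuinely equals $\mu(\Omega_i^e(R))$.
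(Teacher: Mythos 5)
Your proposal is correct and follows essentially the same route as the paper: both obtain the rank identity from additivity of rank along the (minimal) free resolution of $F^e_*R$, and both reduce the strict inequality to Lemma~\ref{projective dimension lemma}, your contradiction argument (rank-zero torsion-free kernel forces $\Omega_i^e(R)$ free, hence finite projective dimension) being just a slightly more explicit version of the paper's observation that a non-free syzygy satisfies $\mu(\Omega_i^e(R))>\rank(\Omega_i^e(R))$.
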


\begin{proof} Rank is additive on exact sequences and there are long exact sequences
\[
0\to \Omega_i^e(R) \to R^{\oplus \beta_{i-1}^e(R)}\to \cdots \to R^{\oplus \mu(F^e_*R)}\to F^e_*R\to 0.
\]
By Lemma~\ref{projective dimension lemma}, if $R$ is not regular, then $\Omega_i^e(R)$ is not free, hence $\beta_i^e(R)=\mu(\Omega_i^e(R))>\rank(\Omega_i^e(R))$.
\end{proof}

\begin{lemma}\label{lower bound for Euler characteristic} Let $(R,\fm)$ be a local $F$-finite domain of prime characteristic $p$ and let $e,i\in \N$ with $e\geq 1$. Then $\chi^e_i(R)\geq (-1)^i \rank(F^e_*R)$ with equality if and only if $R$ is a regular local ring.
\end{lemma}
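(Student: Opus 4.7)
The plan is to read the inequality directly off of Lemma~\ref{rank of syzygy lemma} by applying it one index higher. Concretely, replacing $i$ by $i+1$ in that lemma gives
\[
\rank_R(\Omega_{i+1}^e(R)) = \chi_i^e(R) + (-1)^{i+1}\rank(F^e_*R),
\]
which I would rearrange as
\[
\chi_i^e(R) - (-1)^i\rank(F^e_*R) = \rank_R(\Omega_{i+1}^e(R)) \geq 0.
\]
This proves the inequality, and also identifies the equality case: $\chi_i^e(R) = (-1)^i\rank(F^e_*R)$ if and only if $\rank_R(\Omega_{i+1}^e(R)) = 0$.

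To handle the equality case, I would observe that $\Omega_{i+1}^e(R)$ is a submodule of the free $R$-module in position $i+1$ of the minimal free resolution of $F^e_*R$, and therefore is torsion-free. Over a domain, a torsion-free module of generic rank zero is the zero module, so $\rank_R(\Omega_{i+1}^e(R))=0$ forces $\Omega_{i+1}^e(R)=0$. This exhibits $F^e_*R$ as having finite projective dimension, and since $e\geq 1$, Lemma~\ref{projective dimension lemma} then forces $R$ to be regular.

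For the converse, if $R$ is regular then $F^e_*R$ is free by Theorem~\ref{theorem Kunz}, so $\beta_0^e(R) = \rank(F^e_*R)$ and $\beta_j^e(R) = 0$ for $j\geq 1$; the defining alternating sum for $\chi_i^e(R)$ then collapses to its sole nonzero term $(-1)^i\rank(F^e_*R)$. No step here is a serious obstacle: the inequality is essentially bookkeeping on top of Lemma~\ref{rank of syzygy lemma}, and the equality case only combines the torsion-freeness of syzygies over a domain with the projective-dimension criterion of Lemma~\ref{projective dimension lemma} and Kunz's theorem.
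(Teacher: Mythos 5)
Your proof is correct, and it uses the same key lemma (Lemma~\ref{rank of syzygy lemma}) as the paper's proof, but with a small and arguably cleaner shift. The paper argues at index $i$: for $i\geq 1$ it writes $\chi_i^e(R)=\beta_i^e(R)-\chi_{i-1}^e(R)$ and then invokes the \emph{strict} inequality $\beta_i^e(R)>\rank(\Omega_i^e(R))$ from the ``Moreover'' part of Lemma~\ref{rank of syzygy lemma} when $R$ is not regular, with $i=0$ handled separately via Kunz's theorem. You instead apply the \emph{equality} part of Lemma~\ref{rank of syzygy lemma} at index $i+1$, which rewrites the quantity $\chi_i^e(R)-(-1)^i\rank(F^e_*R)$ directly as $\rank(\Omega_{i+1}^e(R))\geq 0$; this treats all $i\geq 0$ uniformly, and the equality analysis reduces to the elementary fact that a torsion-free module of rank zero over a domain vanishes, followed by Lemma~\ref{projective dimension lemma}. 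Both arguments bottom out in the same ingredients (rank additivity along the minimal free resolution and the finite-projective-dimension characterization of regularity), so the content is the same; your version saves the $i=0$ case split and the strict-inequality half of Lemma~\ref{rank of syzygy lemma}.
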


\begin{proof} For $i = 0$ the lemma follows from Theorem~\ref{theorem Kunz}. If $i\geq 1$ and $e\in \N$ then $\chi_i^e(R)=\beta_i^e(R)-\chi_{i-1}^e(R)$. Applying Lemma~\ref{rank of syzygy lemma} we arrive at 
\[
\chi_i^e(R)\geq \chi_{i-1}^e(R)+(-1)^i\rank(F^e_*R)- \chi_{i-1}^e(R)=(-1)^i\rank(F^e_*R)
\]
with equality if and only if $R$ is a regular local ring.
\end{proof}

\begin{lemma}\label{Equality of Frobenius Euler characteristic} Let $(R,\fm)$ be a local $F$-finite domain and $P\in \Spec R$. Then $\beta_i^e(R)=\beta_i^e(R_P)$ if and only if $\chi_i^e(R)=\chi_i^e(R_P)$ and $\chi_{i-1}^e(R)=\chi_{i-1}^e(R_P)$. In particular, if $\beta_1^e(R)=\beta_1^e(R_P)$ then $\mu(F^e_*R)=\mu(F^e_*R_P)$.
\end{lemma}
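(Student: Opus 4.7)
The starting point is the immediate algebraic identity
\[
\beta_i^e(R) = \chi_i^e(R) + \chi_{i-1}^e(R),
\]
with the convention $\chi_{-1}^e(R):=0$, which comes straight from the definition of $\chi_i^e$; the same identity holds over $R_P$. Subtracting gives
\[
\beta_i^e(R)-\beta_i^e(R_P)=\bigl(\chi_i^e(R)-\chi_i^e(R_P)\bigr)+\bigl(\chi_{i-1}^e(R)-\chi_{i-1}^e(R_P)\bigr).
\]
This alone yields the backward direction of the stated equivalence, while the ``in particular'' assertion will fall out of the forward direction applied at $i=1$, since $\chi_0^e(R)=\mu(F^e_*R)$.

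For the forward direction it therefore suffices, in view of the displayed identity, to establish the monotonicity $\chi_j^e(R)\geq \chi_j^e(R_P)$ for every $j\geq 0$: the two differences on the right-hand side will then be non-negative integers, and the vanishing of their sum will force each of them to vanish. To prove the monotonicity my plan is to combine Lemma~\ref{rank of syzygy lemma}, which expresses $\chi_{j-1}^e(R)$ in terms of $\rank_R(\Omega_j^e(R))$ and $\rank(F^e_*R)$, with the fact that $\rank(F^e_*R)=\rank_{R_P}(F^e_*R_P)$ (both equal the generic rank of $F^e_*R$ over the domain $R$). The remaining ingredient is a syzygy comparison: localizing the minimal free resolution of $F^e_*R$ at $P$ produces a generally non-minimal free resolution of $F^e_*R_P$, so by a Schanuel-style argument over the local ring $R_P$, combined with the fact that $\Omega_j^e(R_P)$ carries no free summand by minimality (Krull--Schmidt plus Nakayama), one obtains an isomorphism
\[
\Omega_j^e(R)_P \;\cong\; \Omega_j^e(R_P) \oplus R_P^{\,c}
\]
for some integer $c\geq 0$. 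Taking ranks gives $\rank_R(\Omega_j^e(R))\geq \rank_{R_P}(\Omega_j^e(R_P))$ and hence $\chi_{j-1}^e(R)\geq \chi_{j-1}^e(R_P)$.

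With the monotonicity in hand, the forward direction is automatic: if $\beta_i^e(R)=\beta_i^e(R_P)$, then both differences in the displayed identity are non-negative integers whose sum is zero, so each vanishes, giving simultaneously $\chi_i^e(R)=\chi_i^e(R_P)$ and $\chi_{i-1}^e(R)=\chi_{i-1}^e(R_P)$. The only step with any real content is the syzygy comparison $\Omega_j^e(R)_P\cong \Omega_j^e(R_P)\oplus R_P^{\,c}$; everything else is bookkeeping with the identity $\beta_j^e=\chi_j^e+\chi_{j-1}^e$, and I do not anticipate any serious obstacle.
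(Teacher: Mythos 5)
Your proof is correct and shares the top-level structure with the paper's: both start from the identity $\beta_i^e = \chi_i^e + \chi_{i-1}^e$ (over $R$ and over $R_P$) and reduce the forward implication to the monotonicity $\chi_j^e(R)\geq \chi_j^e(R_P)$ for $j\geq 0$. Where you genuinely diverge is in \emph{how} that monotonicity is obtained. The paper simply cites the upper semicontinuity of the functions $\chi_j^e\colon\Spec R\to\mathbb{R}$ from \cite[Proposition~3.1]{DSPY3}, and applies it along the specialization $P\rightsquigarrow\fm$. You instead derive it internally: localize the minimal free resolution of $F^e_*R$ at $P$, use the standard decomposition of any free resolution over a local ring into its minimal part plus a split exact free complex to conclude $\Omega_j^e(R)_P\cong \Omega_j^e(R_P)\oplus R_P^{\,c}$, take ranks, and then feed the resulting inequality $\rank_R(\Omega_j^e(R))\geq \rank_{R_P}(\Omega_j^e(R_P))$ into Lemma~\ref{rank of syzygy lemma}, together with $\rank(F^e_*R)=\rank_{R_P}(F^e_*R_P)$. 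This buys a self-contained argument using only tools already developed in the paper, and in fact reproves the specific case of the DSPY3 upper semicontinuity result needed here. (It is also the same syzygy comparison the paper itself uses later in the proof of Theorem~\ref{beti numbers under localization}, so your route is harmonious with the surrounding text.)

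One small comment on your justification of the key isomorphism $\Omega_j^e(R)_P\cong \Omega_j^e(R_P)\oplus R_P^{\,c}$. You invoke a Schanuel-style argument together with the claim that the minimal syzygy $\Omega_j^e(R_P)$ ``carries no free summand by minimality,'' but that claim is not generally true for syzygies in a minimal free resolution, and it is not needed. The clean statement is that over a local ring any free resolution of a module is isomorphic, as a complex, to its minimal free resolution direct-summed with a split exact complex of free modules; taking kernels of the differentials gives the desired isomorphism directly, with $c\geq 0$ automatic. With that justification corrected, the rest of your argument (the bookkeeping, the monotonicity deduction, and the ``in particular'' reading at $i=1$ via $\chi_0^e = \mu(F^e_*R)$) goes through without issue.
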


\begin{proof}
Observe first that
\[
\beta_i^e(R)=\chi_i^e(R)+\chi_{i-1}^e(R)
\]
and 
\[
\beta_i^e(R_P)=\chi_i^e(R_P)+\chi_{i-1}^e(R_P).
\]
Suppose that $\beta_i^e(R)=\beta_i^e(R_P)$. The function $\chi_{i}^e\colon\Spec R\to \mathbb{R}$ is upper semicontinuous, \cite[Proposition~3.1]{DSPY3}, therefore
$
\chi_i^e(R)\geq \chi_i^e(R_P).
$
If $\beta_i^e(R)=\beta_i^e(R_P)$ then 
\[
\chi_{i-1}^e(R_P)\geq \chi_{i-1}^e(R),
\]
but the function $\chi_{i-1}^e\colon\Spec R\to \mathbb{R}$ is also upper semicontinuous, therefore equality must hold.
\end{proof}

Similar to Lemma~\ref{summand lemma}, if $R$ is strongly $F$-regular and a module $M$ appears as a direct summand of $\Omega_i^{e_0}(R)$ for some $e_0\in \mathbb{N}$ then $M$ appears as a direct summand of $\Omega_i^e(R)$ asymptotically many times as $e\to \infty$.

\begin{lemma}\label{summands of syzygies lemma} Let $(R,\fm)$ be an $F$-finite and strongly $F$-regular local ring of prime characteristic $p>0$ and $M$ be a finitely generated $R$-module. If $\srk_M (\Omega_i^{e_0}(R))>0$ for some $e_0\in \N$ then 
\[
\liminf_{e\to \infty}\frac{\srk_M (\Omega_i^e(R))}{\rank(F^e_*R)}>0.
\]
\end{lemma}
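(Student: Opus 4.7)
The plan is to replay the argument of Lemma~\ref{summand lemma} one level deeper in the minimal free resolution. The key input is the fact that over a Noetherian local ring, the minimal free resolution of a direct sum of modules is the direct sum of their minimal free resolutions; equivalently, for every $i$,
\[
\Omega_i(A \oplus B) \cong \Omega_i(A) \oplus \Omega_i(B).
\]
The case $i = 0$ of the lemma is just Lemma~\ref{summand lemma} itself, since $\Omega_0^e(R) = F^e_*R$, so I focus on $i \geq 1$.

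Assuming $\Omega_i^{e_0}(R) \cong M \oplus M'$, I will decompose $F^e_*R \cong R^{\oplus a_e(R)} \oplus N_e$ for each $e$ as in Lemma~\ref{summand lemma}, and apply $F^{e_0}_*$ to obtain
\[
F^{e+e_0}_*R \cong (F^{e_0}_*R)^{\oplus a_e(R)} \oplus F^{e_0}_*N_e.
\]
Taking $i$th syzygies (and using the direct-sum behavior noted above) then yields
\[
\Omega_i^{e+e_0}(R) \cong \bigl(\Omega_i^{e_0}(R)\bigr)^{\oplus a_e(R)} \oplus \Omega_i(F^{e_0}_*N_e) \cong M^{\oplus a_e(R)} \oplus (M')^{\oplus a_e(R)} \oplus \Omega_i(F^{e_0}_*N_e),
\]
so that $\srk_M(\Omega_i^{e+e_0}(R)) \geq a_e(R)$ directly from the definition of $\srk_M$. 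Dividing by $\rank(F^{e+e_0}_*R) = \rank(F^{e_0}_*R)\cdot\rank(F^e_*R)$ and passing to $\liminf$ gives
\[
\liminf_{e \to \infty} \frac{\srk_M(\Omega_i^e(R))}{\rank(F^e_*R)} \;\geq\; \frac{\fsig(R)}{\rank(F^{e_0}_*R)} \;>\; 0,
\]
where the strict positivity of $\fsig(R)$ is exactly the strongly $F$-regular hypothesis.

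There is no serious obstacle; the argument is a formal analog of Lemma~\ref{summand lemma} with $F^e_*R$ replaced by $\Omega_i^e(R)$. The only point that deserves a sentence of justification in the writeup is why syzygies commute with direct sums for minimal free resolutions: this follows from minimality, since the direct sum of two minimal free resolutions is again minimal (differentials still have image in $\fm$ times the next free module), combined with the essential uniqueness of minimal free resolutions over a local ring.
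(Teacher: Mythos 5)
Your proof is correct and follows essentially the same route as the paper: decompose $F^e_*R$ using free summands, apply $F^{e_0}_*$, and use the fact that minimal free resolutions (hence syzygies) respect direct sums to propagate the $M$-summand from $\Omega_i^{e_0}(R)$ into $\Omega_i^{e+e_0}(R)$ with multiplicity at least $a_e(R)$. The only difference is cosmetic: you make explicit the syzygies-of-direct-sums step that the paper dismisses with ``it readily follows.''
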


\begin{proof}
Suppose $M$ is a direct summand of $\Omega_i^{e_0}(R)$. Observe that $F^{e+e_0}_*R$ has a direct summand $F^{e_0}_*R^{\oplus a_{e}}$. It readily follows that $\Omega_{e+e_0}(R)$ has $\Omega_{e_0}(R)^{\oplus a_e(R)}$ as a direct summand and therefore $\srk_M (\Omega_{e+e_0})\geq a_e(R)$. In particular,
\[
\liminf_{e\to \infty}\frac{\srk_M (\Omega_{e}(R))}{\rank(F^e_*R)}\geq \frac{s(R)}{\rank(F^{e_0}_*R)}>0.
\]
\end{proof}

We are now prepared to prove Theorem~\ref{Main equimultiplicity theorem} for Frobenius Betti numbers and Frobenius Euler characteristics. We first present a proof of Theorem~\ref{Main equimultiplicity theorem} for Frobenius Betti numbers.

\begin{theorem}\label{beti numbers under localization}
Let $(R,\fm)$ be an $F$-finite strongly $F$-regular local ring of prime characteristic $p>0$ and $P\in \Spec R$. Then for each integer $i\geq 0$, $\beta_i^F(R)=\beta_i^F(R_P)$ if and only if $\beta_i^e(R)=\beta_i^e(R_P)$ for every $e\in \N$.
\end{theorem}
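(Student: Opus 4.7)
The plan is as follows. The implication (2) $\Rightarrow$ (1) is immediate, since $\rank(F^e_*R_P)=\rank(F^e_*R)$ for every $e$. For the converse I argue the contrapositive: assuming $\beta_i^{e_0}(R)>\beta_i^{e_0}(R_P)$ for some $e_0\in\N$, I will deduce $\beta_i^F(R)>\beta_i^F(R_P)$. The case $i=0$ reduces to Theorem~\ref{Theorem Hilbert--Kunz the same}, since $\beta_0^e(-)=\mu(F^e_*(-))$, so I focus on $i\geq 1$.

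Following the strategy of Theorem~\ref{theorem signature the same} and Theorem~\ref{Theorem Hilbert--Kunz the same}, I first arrange a decomposition that produces many copies of an obstruction. Write $F^{e_0}_*R\cong R^{\oplus a_{e_0}(R)}\oplus M_{e_0}$ with $M_{e_0}$ having no free $R$-summand. For any $e\in\N$, writing $F^e_*R\cong R^{\oplus a_e(R)}\oplus N_e$ and applying $F^{e_0}_*(-)$ exhibits $M_{e_0}^{\oplus a_e(R)}$ as a direct summand of $F^{e+e_0}_*R$; thus $F^{e+e_0}_*R\cong R^{\oplus A_e}\oplus M_{e_0}^{\oplus a_e(R)}\oplus L_e$ for some $R$-module $L_e$ with no free summand. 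Since for $i\geq 1$ the syzygy $\Omega_i$ in a minimal $R$-resolution is additive on direct sums and kills free summands,
\[
\Omega_i^{e+e_0}(R)\cong \Omega_i(M_{e_0})^{\oplus a_e(R)}\oplus \Omega_i(L_e),
\]
so $\beta_i^{e+e_0}(R)=a_e(R)\beta_i^{e_0}(R)+\mu(\Omega_i(L_e))$.

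The heart of the argument is a parallel analysis over $R_P$. Localizing the decomposition of $F^{e+e_0}_*R$ and peeling off any extra free $R_P$-summands that $(M_{e_0})_P$ and $(L_e)_P$ may acquire---summands that $\Omega_i$ again discards for $i\geq 1$---yields $\beta_i^{e+e_0}(R_P)=a_e(R)\beta_i^{e_0}(R_P)+\mu(\Omega^{R_P}_i((L_e)_P))$, where $\Omega^{R_P}_i$ denotes the $i$th syzygy in the minimal $R_P$-resolution. Comparing that minimal resolution with the localized (non-minimal) minimal $R$-resolution of $L_e$ gives the standard estimate $\mu(\Omega^{R_P}_i((L_e)_P))\leq \mu((\Omega_i(L_e))_P)\leq \mu(\Omega_i(L_e))$, and subtracting the two expressions for $\beta_i^{e+e_0}$ produces the propagation inequality
\[
\beta_i^{e+e_0}(R)-\beta_i^{e+e_0}(R_P)\geq a_e(R)\bigl(\beta_i^{e_0}(R)-\beta_i^{e_0}(R_P)\bigr).
\]

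Dividing by $\rank(F^{e+e_0}_*R)=\rank(F^e_*R)\cdot\rank(F^{e_0}_*R)$ and passing to the $\liminf$ as $e\to\infty$, strong $F$-regularity delivers $\lim a_e(R)/\rank(F^e_*R)=\fsig(R)>0$, and therefore
\[
\beta_i^F(R)-\beta_i^F(R_P)\geq \frac{\fsig(R)\bigl(\beta_i^{e_0}(R)-\beta_i^{e_0}(R_P)\bigr)}{\rank(F^{e_0}_*R)}>0,
\]
contradicting (1). The main obstacle is the careful bookkeeping over $R_P$: $(M_{e_0})_P$ and $(L_e)_P$ can acquire free $R_P$-summands absent over $R$, so one must separate these off before taking $\Omega_i$ in the parallel $R_P$-computation. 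The saving point is that $\Omega_i$ for $i\geq 1$ is insensitive to free summands, so these extra $R_P$-free parts do not inflate the $a_e(R)\beta_i^{e_0}(R_P)$ count, and the propagation inequality survives intact.
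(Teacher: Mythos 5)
Your proof is correct and follows essentially the same approach as the paper: exhibit asymptotically many copies of an obstruction module as direct summands of the $i$th syzygy of $F^e_*R$, and use $\fsig(R)>0$ to turn a pointwise defect at a single $e_0$ into a strictly positive gap between $\beta_i^F(R)$ and $\beta_i^F(R_P)$. The only cosmetic differences are that you carry out the bookkeeping via the explicit $M_{e_0}^{\oplus a_e(R)}$-summand of $F^{e+e_0}_*R$ rather than via the quantity $\srk_{\Omega_i^{e_0}(R)}(\Omega_i^e(R))$ as in the paper, and that you split off the $i=0$ case (which reduces to Theorem~\ref{Theorem Hilbert--Kunz the same}) instead of letting the syzygy argument subsume it.
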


\begin{proof} Clearly if $\beta_i^e(R)=\beta_i^e(R_P)$ for every integer $e$ then $\beta_i^F(R)=\beta_i^F(R_P)$. Suppose there exists an integer $e_0$ such that $\mu(\Omega_i^{e_0}(R_P))<\mu(\Omega_i^{e_0}(R))$. For each $e\in \N$ let $b_e=\srk_{\Omega_i^{e_0}(R)}(\Omega^e_i(R))$. Then we can write $\Omega^e_i(R)\cong \Omega^{e_0}_{i}(R)^{\oplus b_e}\oplus M_e$. Localizing at $P$, 
\[
\Omega_i^e(R)_P\cong \Omega_i^e(R_P)\oplus F_P
\]
where $F_P$ is a free $R_P$-module. It readily follows that
\[
\mu(\Omega_i^e(R_P))\leq \mu(\Omega_i^e(R)_P)\leq \mu(\Omega_i^e(R))-b_e.
\]
Therefore $\beta_i^F(R_P)\leq \beta_i^F(R)-\liminf_{e\to \infty}\frac{b_e}{\rank(F^e_*R)}$ which is strictly less then $\beta_i^F(R)$ by Lemma~\ref{summands of syzygies lemma}.
\end{proof}

Following the proof of Theorem~\ref{Theorem Hilbert--Kunz the same}
we recover \cite{AberbachLi} for strongly $F$-regular rings.

\begin{corollary}
Let $(R,\fm)$ be an $F$-finite strongly $F$-regular local ring of prime characteristic $p>0$.
Then for each integer $i\geq 0$, $\beta_i^F(R)=0$ if and only if $R$ is a regular local ring.
\end{corollary}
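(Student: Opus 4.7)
The plan is to mirror the proof of Theorem~\ref{theorem signature 1 iff regular} by applying the equimultiplicity theorem for Frobenius Betti numbers at the generic point $P=(0)$. The easy direction will follow immediately from Kunz's Theorem~\ref{theorem Kunz}: if $R$ is regular, then $F^e_*R$ is a free $R$-module for every $e\in\N$, so $\beta_i^e(R)=0$ for every $e\in\N$ and every $i\geq 1$, and hence $\beta_i^F(R)=0$.

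For the converse, I would start from the fact that a strongly $F$-regular local ring is a domain, so its localization $R_{(0)}$ at the zero ideal is the fraction field of $R$. Over a field every module is free, so $F^e_*R_{(0)}$ is $R_{(0)}$-free and $\beta_i^e(R_{(0)})=0$ for every $e\geq 1$ and every $i\geq 1$, which yields $\beta_i^F(R_{(0)})=0=\beta_i^F(R)$. Invoking Theorem~\ref{beti numbers under localization} with $P=(0)$ then upgrades this to the pointwise equality $\beta_i^e(R)=\beta_i^e(R_{(0)})=0$ for every $e\in\N$.

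To close the argument I would fix any $e\geq 1$. The vanishing $\beta_i^e(R)=\mu(\Omega_i^e(R))=0$ forces $\Omega_i^e(R)=0$, so the minimal free resolution of $F^e_*R$ terminates in at most $i-1$ steps and $F^e_*R$ has finite projective dimension over $R$. Lemma~\ref{projective dimension lemma} then delivers the conclusion that $R$ is regular.

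I do not foresee any genuine obstacle: the corollary is a one-shot application of Theorem~\ref{beti numbers under localization} at the generic point, combined with Kunz's theorem and the Auslander--Buchsbaum observation packaged in Lemma~\ref{projective dimension lemma}. The only subtlety worth flagging is that the statement is meaningful only for $i\geq 1$, since $\beta_0^F(R)=\ehk(R)\geq 1$ always; this is consistent with the original Aberbach--Li result being recovered.
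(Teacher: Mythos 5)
Your proposal is correct and follows essentially the same route as the paper: localize at the generic point (where all higher Betti numbers vanish since $R_{(0)}$ is a field), apply Theorem~\ref{beti numbers under localization} to get $\beta_i^e(R)=0$ for all $e$, and conclude via Nakayama and Lemma~\ref{projective dimension lemma}, with the easy direction from Theorem~\ref{theorem Kunz}. Your remark that the statement is really about $i\geq 1$ (since $\beta_0^F(R)=\ehk(R)\geq 1$) is a fair observation about the statement as written, but it does not affect the argument.
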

\begin{proof}
By Lemma~\ref{projective dimension lemma}, $\beta_i^F(R_{(0)}) = 0 =  \beta_i^F(R)$. Therefore $\beta_i^e(R) = \beta_i^e (R_{(0)}) = 0$ and the claim follows from Lemma~\ref{projective dimension lemma}.
\end{proof}

Finally, we complete our proof of Theorem~\ref{Main equimultiplicity theorem} by establishing an equimultiplicity criterion for Frobenius Euler characteristic.

\begin{theorem}\label{Euler characteristic under localization}
Let $(R,\fm)$ be an $F$-finite strongly $F$-regular local ring of prime characteristic $p>0$ and $P\in \Spec R$. Then for each integer $i\geq 0$, $\chi_i^F(R)=\chi_i^F(R_P)$ if and only if $\chi_i^e(R)=\chi_i^e(R_P)$ for every $e\in \N$.
\end{theorem}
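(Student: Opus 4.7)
The plan is to mimic the arguments for Hilbert--Kunz multiplicity (Theorem~\ref{Theorem Hilbert--Kunz the same}) and Frobenius Betti numbers (Theorem~\ref{beti numbers under localization}) after establishing a clean numerical identity. The forward direction is immediate: since $R$ is a domain, Kunz's formula forces $\rank_R(F^e_*R)=\rank_{R_P}(F^e_*R_P)=[K:K^{p^e}]$, where $K$ is the fraction field, so the same denominator $r_e$ works for both limits and termwise equality at every $e$ passes to the limit.

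For the substantive direction, the first step is to prove, for every $e\in\N$, the identity
\[
\chi_i^e(R) - \chi_i^e(R_P) \;=\; \mu_R(\Omega_i^e(R)) - \mu_{R_P}(\Omega_i^e(R)_P).
\]
This drops out of Lemma~\ref{rank of syzygy lemma}, which expresses $\chi_{i-1}^e(R)=\rank_R(\Omega_i^e(R))-(-1)^i r_e$ (and the analogous equation over $R_P$), combined with the Krull--Schmidt pruning $\Omega_i^e(R)_P\cong \Omega_i^e(R_P)\oplus R_P^{\oplus r}$: the rank-$r$ free summand contributes $r$ to both the rank and the number of generators, so its contributions cancel when computing the difference. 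Note that the right-hand side is manifestly non-negative, recovering the upper semicontinuity of $\chi_i^e$.

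Proceeding by contrapositive, suppose $\chi_i^{e_0}(R)>\chi_i^{e_0}(R_P)$ for some $e_0$, equivalently $\mu_R(\Omega_i^{e_0}(R))>\mu_{R_P}(\Omega_i^{e_0}(R)_P)$. Setting $M:=\Omega_i^{e_0}(R)$, one has $\srk_M(\Omega_i^{e_0}(R))\geq 1$, so Lemma~\ref{summands of syzygies lemma} yields $\liminf_{e\to\infty} b_e/r_e>0$, where $b_e:=\srk_M(\Omega_i^e(R))$. Writing $\Omega_i^e(R)\cong M^{\oplus b_e}\oplus N_e$ and localizing gives
\[
\mu_{R_P}(\Omega_i^e(R)_P) \;\leq\; b_e\,\mu_{R_P}(M_P)+\mu_R(N_e),
\]
and since $\mu_R(\Omega_i^e(R))=b_e\mu_R(M)+\mu_R(N_e)$,
\[
\mu_R(\Omega_i^e(R))-\mu_{R_P}(\Omega_i^e(R)_P) \;\geq\; b_e\bigl(\mu_R(M)-\mu_{R_P}(M_P)\bigr) \;\geq\; b_e
\]
by the choice of $e_0$. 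Dividing by $r_e$ and taking the limit yields $\chi_i^F(R)-\chi_i^F(R_P)\geq \liminf_e b_e/r_e>0$, contradicting the hypothesis $\chi_i^F(R)=\chi_i^F(R_P)$.

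The main obstacle is isolating the identity in the second paragraph; once that is in hand the asymptotic summand-harvesting engine already developed for the other $F$-invariants transplants essentially verbatim, and in fact one does not need to pass to indecomposable summands or appeal to Krull--Schmidt over $\hat R$ (as in Remark~\ref{remark free summands}): taking $M=\Omega_i^{e_0}(R)$ itself suffices because the comparison is at the level of the whole syzygy module rather than individual pieces.
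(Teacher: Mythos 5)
Your argument is correct and follows essentially the same strategy as the paper's proof: invoke Lemma~\ref{rank of syzygy lemma} to convert equality of $\chi_i^e$ across localization into equality of a syzygy-module invariant, then apply the summand-harvesting Lemma~\ref{summands of syzygies lemma} to $M=\Omega^{e_0}$ and force a strict drop in the limit whenever some $e_0$ witnesses a strict inequality. The paper tracks $\rank(\Omega_{i+1}^e(R))-\rank(\Omega_{i+1}^e(R_P))$ and harvests copies of $\Omega_{i+1}^{e_0}(R)$, whereas you track $\mu_R(\Omega_i^e(R))-\mu_{R_P}(\Omega_i^e(R)_P)$ and harvest copies of $\Omega_i^{e_0}(R)$; these are equivalent reformulations of the same identity coming from Lemma~\ref{rank of syzygy lemma}, so the two proofs are the same in substance.
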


\begin{proof} Without loss of generality we may assume $R$ is not regular. By Lemma~\ref{rank of syzygy lemma}, $\chi_i^e(R)=\chi_i^e(R_P)$ if and only if $\rank(\Omega^e_{i+1}(R))=\rank(\Omega^e_{i+1}(R_P))$ and $\chi_i^F(R)=\chi_i^F(R_P)$ if and only if 
\[
\lim_{e\to \infty}\frac{\rank(\Omega_i^e(R))}{\rank(F^e_*R)}=\lim_{e\to \infty}\frac{\rank(\Omega_i^e(R_P))}{\rank(F^e_*R)}.
\]
Suppose there exists an integer $e_0$ such that $\rank(\Omega_{i+1}^{e_0}(R))\not = \rank(\Omega_{i+1}^{e_0}(R_P))$. Therefore  $\Omega_{i+1}^{e_0}(R)_P$ has a nonzero free summand. 
Let $b_e=\srk_{\Omega_{i+1}^{e_0}(R)}(\Omega_{i+1}^e(R))$, by Lemma~\ref{summands of syzygies lemma} $\liminf\limits_{e\to \infty }\frac{b_e}{\rank(F^e_*R)}>0$. Then for each integer $e\in \N$ the $R_P$-module $\Omega_{i+1}^e(R)_P$ contains a free summand of rank $b_e$. In particular, we have that $\rank(\Omega_{i+1}^e(R_P))\leq \rank(\Omega_{i+1}^e(R)_P)-b_e$. Therefore
\[
\lim_{e\to \infty}\frac{\rank(\Omega_{i+1}^e(R_P))}{\rank(F^e_*R)}\leq \lim_{e\to \infty}\frac{\rank(\Omega_{i+1}^e(R))}{\rank(F^e_*R)}-\liminf_{e\to \infty}\frac{b_e}{\rank(F^e_*R)}<\lim_{e\to \infty}\frac{\rank(\Omega_{i+1}^e(R))}{\rank(F^e_*R)}.
\]
\end{proof}

As with $F$-signature, Hilbert--Kunz multiplicity, and Frobenius Betti numbers, we now know that Frobenius Euler characteristic can be used to detect regular rings, provided we know the ring being studied is strongly $F$-regular.

\begin{theorem}
Let $(R,\fm)$ be an $F$-finite strongly $F$-regular local ring of prime characteristic $p>0$. The following are equivalent:
\begin{enumerate}
\item $R$ is a regular local ring;
\item $\chi^e_i(R)=(-1)^i\rank(F^e_*R)$ for every $e\in \NN$;
\item $\chi^e_i(R)=(-1)^i\rank(F^e_*R)$ for some $e\in \NN$;
\item $\chi_i^F(R)=(-1)^i$.
\end{enumerate}
\end{theorem}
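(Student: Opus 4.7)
The implications $(2) \Leftrightarrow (3) \Leftrightarrow (1)$ are immediate from Lemma~\ref{lower bound for Euler characteristic}, which asserts that $\chi^e_i(R) \geq (-1)^i\rank(F^e_*R)$ with equality if and only if $R$ is regular, and $(2) \Rightarrow (4)$ follows by dividing by $\rank(F^e_*R)$ and passing to the limit. The substantive task is therefore to establish $(4) \Rightarrow (1)$.

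The plan is to apply Theorem~\ref{Euler characteristic under localization} at the prime ideal $P = (0)$. Since $R$ is strongly $F$-regular it is a domain, so $(0) \in \Spec R$ and $R_{(0)}$ is the fraction field of $R$, which is regular. Over a field, the minimal free resolution of $F^e_*R_{(0)}$ is trivial, so by a direct computation
\[
\chi_i^e(R_{(0)}) = (-1)^i \rank_{R_{(0)}}(F^e_*R_{(0)}) = (-1)^i\rank(F^e_*R)
\]
for every $e$ and every $i$. Dividing through by $\rank(F^e_*R)$ and passing to the limit yields $\chi_i^F(R_{(0)}) = (-1)^i$.

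Now, assuming (4), we have $\chi_i^F(R) = (-1)^i = \chi_i^F(R_{(0)})$, so Theorem~\ref{Euler characteristic under localization} forces $\chi_i^e(R) = \chi_i^e(R_{(0)}) = (-1)^i\rank(F^e_*R)$ for every $e\in \N$. A second application of Lemma~\ref{lower bound for Euler characteristic} then forces $R$ to be a regular local ring, closing the cycle of implications.

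The main technical input has already been absorbed into Theorem~\ref{Euler characteristic under localization} and Lemma~\ref{lower bound for Euler characteristic}; the only new observation needed here is that the localization at the generic point of $\Spec R$ supplies a tractable regular ring against which to test equality of Frobenius Euler characteristics. This mirrors the strategy used earlier in the section for $F$-signature in Theorem~\ref{theorem signature 1 iff regular} and for Hilbert--Kunz multiplicity via Theorem~\ref{Theorem Hilbert--Kunz the same}, and beyond the routine bookkeeping just described I do not anticipate any serious obstacle.
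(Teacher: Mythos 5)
Your proposal is correct and follows exactly the paper's own route: equivalence of (1)–(3) via Lemma~\ref{lower bound for Euler characteristic}, (2)~$\Rightarrow$~(4) trivially, and (4)~$\Rightarrow$~(2) by testing against the generic point $(0)$ and invoking Theorem~\ref{Euler characteristic under localization}. The paper merely sketches the generic-point step by analogy with Theorem~\ref{theorem signature 1 iff regular}, whereas you spell it out; the content is the same.
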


\begin{proof}
The equivalence of $(1), (2)$, and $(3)$ is the content of Lemma~\ref{lower bound for Euler characteristic} and $(4)$ is trivially implied by condition $(2)$. Now, an argument with the 
generic point as in Theorem~\ref{theorem signature 1 iff regular} shows that $(4)$ implies $(2)$ by Theorem~\ref{Euler characteristic under localization}.
\end{proof}

\section{An associativity formula for $F$-signature}\label{Section Associativity}

Our proof of Theorem~\ref{Main Theorem Associativity type formula} begins with two technical lemmas.

\begin{lemma}\label{fsig monotone}
Let $(R,\mf m)$ be an $F$-finite local ring of prime characteristic $p>0$. Suppose that $I\subset R$ is an ideal such that $R/I$ is Cohen-Macaulay of dimension $h$ and $x_1, x_2, \ldots, x_h$ a parameter sequence on $R/I$. Then for all sequences of natural numbers $n_1,n_2,\ldots, n_h$ we have that 
\begin{enumerate}
\item \[
\length \left(\frac{I_e(I+ (x_1^{n_1},x_2^{n_2},\ldots, x_h^{n_h}))}{I_e(I+ (x_1^{n_1+1},x_2^{n_2},\ldots, x_h^{n_h}))}\right)\geq \length \left(\frac{I_e(I+ (x_1^{n_1-1},x_2^{n_2},\ldots, x_h^{n_h}))}{I_e(I+ (x_1^{n_1},x_2^{n_2},\ldots, x_h^{n_h}))}\right)
\]
and
\item \[
\frac{1}{n_1+1}\lambda\left(\frac{R}{I_e(I+(x_1^{n_1+1},x_2^{n_2},\ldots, x_h^{n_h}))}\right)\geq \frac{1}{n_1}\lambda\left(\frac{R}{I_e(I+(x_1^{n_1},x_2^{n_2},\ldots, x_h^{n_h})}\right).
\]
\end{enumerate}
\end{lemma}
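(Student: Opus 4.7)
The plan is to fix $n_2,\ldots, n_h$, write $J_n := I_e(I + (x_1^n, x_2^{n_2}, \ldots, x_h^{n_h}))$, and reduce both statements to the single colon identity
\[
J_{n+1}:x_1^{p^e} \;=\; J_n \qquad (n\geq 0).
\]
To establish this identity, I would apply property (7) of Lemma~\ref{Basic properties} to rewrite the colon as $I_e\bigl((I + (x_1^{n+1}, x_2^{n_2}, \ldots, x_h^{n_h})):x_1\bigr)$, reducing the problem to the ideal-theoretic claim
\[
(I + (x_1^{n+1}, x_2^{n_2}, \ldots, x_h^{n_h})):x_1 \;=\; I + (x_1^n, x_2^{n_2}, \ldots, x_h^{n_h}).
\]
Because $R/I$ is Cohen-Macaulay of dimension $h$, any permutation of $x_1,\ldots,x_h$ is a regular sequence on $R/I$; in particular $x_1$ is a regular element on $R/(I + (x_2^{n_2}, \ldots, x_h^{n_h}))$, and the colon computation is immediate.

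With the colon identity in hand, statement (1) falls out by exhibiting the $R$-linear map
\[
\varphi\colon J_{n_1-1}/J_{n_1} \longrightarrow J_{n_1}/J_{n_1+1}, \qquad r + J_{n_1} \longmapsto x_1^{p^e} r + J_{n_1+1}.
\]
Well-definedness reduces to the inclusions $x_1^{p^e}J_{n_1-1}\subseteq J_{n_1}$ and $x_1^{p^e}J_{n_1}\subseteq J_{n_1+1}$, both of which are direct translations of the colon identity. Injectivity is equally formal: if $r\in J_{n_1-1}$ and $x_1^{p^e}r \in J_{n_1+1}$, then $r\in J_{n_1+1}:x_1^{p^e}=J_{n_1}$. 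The existence of this injection gives the length inequality in (1).

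For statement (2), I would note that $J_0 = I_e(R) = R$, so the telescoping identity $\length(R/J_{n_1}) = \sum_{k=1}^{n_1}\length(J_{k-1}/J_k)$ is available. Part (1) shows the summands are non-decreasing in $k$, hence each is at most $\length(J_{n_1}/J_{n_1+1}) = \length(R/J_{n_1+1}) - \length(R/J_{n_1})$. Summing over $1\leq k\leq n_1$ yields
\[
\length(R/J_{n_1}) \;\leq\; n_1\bigl(\length(R/J_{n_1+1}) - \length(R/J_{n_1})\bigr),
\]
which rearranges to the inequality in (2). I do not foresee a real obstacle here; the only step requiring a moment of care is the regular-sequence argument in the Cohen-Macaulay quotient that powers the colon identity, and everything else is formal.
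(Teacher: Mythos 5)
Your proposal is correct and follows essentially the same route as the paper: the heart of both arguments is the colon identity $I_e(I+(x_1^{n+1},\ldots)):x_1^{p^e}=I_e(I+(x_1^{n},\ldots))$, obtained from Lemma~\ref{Basic properties}(7) together with the standard colon computation for parameter ideals in the Cohen--Macaulay ring $R/I$, which makes multiplication by $x_1^{p^e}$ an injection of consecutive quotients (the paper packages this as a short exact sequence), and then (2) follows from (1) by the same telescoping filtration of $\length(R/I_e(I+(x_1^{n_1},\ldots)))$.
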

\begin{proof}
We may pass to $I' = I + (x_2^{n_2}, \ldots, x_h^{n_h})$
and assume that $\dim R/I = 1$.

We claim there exist short exact sequences
\[
0\to \frac{I_e(I+(x^{n-1}))}{I_e(I+(x^n))}\xrightarrow{\cdot x^{p^e}}\frac{I_e(I+(x^{n}))}{I_e(I+(x^{n+1}))}\to \frac{I_e(I+(x^{n}))}{I_e(I+(x^{n+1}))+x^{p^e}I_e(I+(x^{n-1}))}\to 0.
\]
Observe that if such short exact sequences exist then the first inequality is obvious since the length of the left piece of a short exact sequence is no more than the length of the middle term. 
The second inequality is equivalent to 
the inequality
\[
n \length \left (\frac {I_e(I + (x^{n}))}{I_e (I + (x^{n+1}))} \right)
= 
n\left (\length \left ( \frac{R}{I_e(I + (x^{n + 1}))} \right) - \length \left ( \frac{R}{I_e(I + (x^n))} \right) \right) \geq \length (R/I_e(I + (x^n))),
\]
an inequality which follows from the first since we can filter $\length (R/I_e(I + (x^n)))$ as 
\[
\length (R/I_e(I + (x^n))) = 
\sum_{i = 0}^{n - 1} \length \left (\frac{I_e(I+ (x^{i}))}{I_e(I + (x^{i + 1}))} \right).
\]

To show that the above short exact sequences exist, we first notice that 
\[
x^{p^e}I_e(I+(x^{n-1}))\subseteq I_e(xI+(x^{n})) \subseteq I_e(I+(x^{n})).
\]
Indeed, if $u\in I_e(I+(x^{n-1}))$ and $\varphi \in \Hom_R(F^e_*R,R)$ then \[\varphi(F^e_*x^{p^e}u)=x\varphi(F^e_*u)\in x(I+(x^{n-1}))\subseteq (I+(x^n)).\] 
Therefore there are right exact sequences
\[
\frac{I_e(I+(x^{n-1}))}{I_e(I+x^{n}))}\xrightarrow{\cdot x^{p^e}}\frac{I_e(I+(x^{n}))}{I_e(I+(x^{n+1}))}\to \frac{I_e(I+(x^{n}))}{I_e(I+(x^{n+1}))+x^{p^e}I_e(I+(x^{n-1}))}\to 0.
\]
To show injectivity of the first map observe that an element $u\in I_e(I+(x^{n-1}))$ satisfies $x^{p^e}u\in I_e(I+(x^{n+1}))$ if and only if $u\in I_e(I+(x^{n+1})):x^{p^e}$. By (7) of Lemma~\ref{Basic properties} we have that 
\[
I_e(I+(x^{n_1+1})):x^{p^e}=I_e((I+(x^{n+1})):x)=I_e(I+(x^{n})),
\]
where the second equality follows by standard observations on parameter ideals in the Cohen-Macaulay ring $R/I$.
\end{proof}

The following technical lemma is very much in the spirit of \cite[Theorem~4.3]{PolstraTucker}.

\begin{lemma}\label{Technical lemma}
Let $(R, \mf m)$ be an $F$-finite local domain of prime characteristic $p>0$ and of Krull dimension $d$. Suppose that $I\subset R$ is an ideal such that $R/I$ is Cohen-Macaulay of dimension $h$ and $\underline{x}=x_1,\ldots, x_h$ a parameter sequence on $R/I$. Then there exists a constant $C\in \mathbb{R}$ such that for all $e,n_1,n_2,\ldots, n_h\in \N$
\[
\left|\frac{1}{p^{ed}}\lambda(R/I_e(I+(x_1^{n_1},\ldots, x_h^{n_h}))-\fsig(I+(x_1^{n_1},\ldots, x_h^{n_h}))\right|\leq \frac{Cn_1\cdots n_h}{p^e}.
\]
\end{lemma}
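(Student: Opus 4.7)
The target inequality is an effective rate of convergence for the limit defining $\fsig(\fa_n)$, with implicit constant scaling linearly in $n_1\cdots n_h$, where $\fa_n := I + (x_1^{n_1},\ldots,x_h^{n_h})$. The plan is to reduce to a uniform rate-of-convergence estimate in the spirit of \cite[Theorem~4.3]{PolstraTucker}: there exists a constant $C_R$ depending only on $R$ such that for every $\fm$-primary ideal $\fa\subseteq R$ and every $e\in\N$,
\[
\left|\frac{\length(R/I_e(\fa))}{p^{ed}} - \fsig(\fa)\right| \;\leq\; \frac{C_R\,\length(R/\fa)}{p^e}.
\]
Call this estimate $(\star)$.

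Given $(\star)$, the lemma follows at once. Since $R/I$ is Cohen--Macaulay of dimension $h$ and $\underline{x}$ is a regular sequence on $R/I$, associativity of length yields
\[
\length(R/\fa_n) \;=\; \length_{R/I}\bigl((R/I)/(x_1^{n_1},\ldots,x_h^{n_h})(R/I)\bigr) \;=\; n_1\cdots n_h\cdot \eh(\underline{x};R/I),
\]
so one may take $C := C_R\cdot \eh(\underline{x};R/I)$.

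To prove $(\star)$, set $f_e(\fa):=\length(R/I_e(\fa))/p^{ed}$; by Lemma~\ref{Basic properties}(5), $f_e(\fa)\to \fsig(\fa)$ as $e\to\infty$. The approach is a Cauchy--telescoping argument: establish a one-step estimate $|f_e(\fa) - f_{e+1}(\fa)| \leq C_R'\,\length(R/\fa)/p^{e+1}$ and sum the resulting geometric series. For the one-step estimate, the inclusion $I_e(\fa)^{[p]}\subseteq I_{e+1}(\fa)$ from Lemma~\ref{Basic properties}(3) yields a surjection $R/I_e(\fa)^{[p]}\twoheadrightarrow R/I_{e+1}(\fa)$, which I would combine with the identification $F^{e+1}_*R\cong F_*(F^e_*R)$ to transfer the comparison down to $\length(R/\fa)$ using the splitting structure of $F_*R$.

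The main obstacle is the one-step estimate with the desired scaling. A naive argument would introduce an error of the form $\length(R/I_e(\fa)^{[p]}) - p^d\length(R/I_e(\fa))$, whose crude bound grows with $\length(R/I_e(\fa))\sim \fsig(\fa)\,p^{ed}$ and destroys the target rate. Overcoming this requires the refined module-theoretic analysis underlying \cite[Theorem~4.3]{PolstraTucker}, which effectively replaces the growing quantity $\length(R/I_e(\fa))$ by the fixed quantity $\length(R/\fa)$ at the cost of an implicit constant depending only on global invariants of the Frobenius pushforward $F_*R$.
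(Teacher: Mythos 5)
Your reduction step is correct and coincides with the paper's: the identity $\length(R/(I+(x_1^{n_1},\ldots,x_h^{n_h}))) = n_1\cdots n_h\,\eh(\underline{x};R/I)$ (valid because $R/I$ is Cohen--Macaulay and the $x_i^{n_i}$ form a system of parameters) is precisely how the paper converts the linear dependence on $\length(R/\fa)$ into the factor $n_1\cdots n_h$. You have also correctly located the key technical ingredient: a rate-of-convergence estimate with constant depending linearly on $\length(R/\fa)$, which is essentially the PolstraTucker technique with the constant tracked.

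However, the estimate you call $(\star)$ is the content of the lemma, and your proposal does not prove it; it states it, identifies the obstacle, and then defers to ``the refined module-theoretic analysis underlying \cite[Theorem~4.3]{PolstraTucker}'' without exhibiting it. That analysis is not a black box one can cite here, because the published form of \cite[Theorem~4.3]{PolstraTucker} does not assert the explicit linear scaling in $\length(R/\fa)$; the paper must (and does) re-run the argument to extract it. Concretely, the paper takes two dual short exact sequences $0 \to F_*R \to R^{\oplus\rank(F_*R)} \to T \to 0$ and $0 \to R^{\oplus\rank(F_*R)} \to F_*R \to T' \to 0$ with $T, T'$ torsion, uses properties (4) and (3) of Lemma~\ref{Basic properties} respectively to compare $I_e(\fa)$ across consecutive values of $e$, picks nonzero annihilators $c$ of $T$ and $T'$, and bounds the torsion contributions by $\mu(T)\,\length(R/(c,\fa^{[p^e]})) \leq Cp^{e(d-1)}\length(R/\fa)$ via \cite[Proposition~3.3]{PolstraLSC} before telescoping with \cite[Lemma~3.5]{PolstraTucker}. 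Your one-step sketch using only $I_e(\fa)^{[p]}\subseteq I_{e+1}(\fa)$ supplies at most one of the two required inequalities (it gives a surjection $R/I_e(\fa)^{[p]}\twoheadrightarrow R/I_{e+1}(\fa)$, hence an upper bound on $\length(R/I_{e+1}(\fa))$); the reverse comparison, which is what makes the two-sided estimate possible, comes from Lemma~\ref{Basic properties}(4) and the second short exact sequence, and is missing from your sketch. So the plan and the reduction are sound and match the paper, but the proof of $(\star)$, which is the actual work of the lemma, remains to be supplied.
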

\begin{proof}
Denote by $\underline{N}$ a Cartesian product of natural numbers, $\underline{N}=(n_1,n_2,\ldots, n_h)$, let $N=n_1n_2\cdots n_h$, and for each $\underline{N}$ let $\underline{x}^{\underline{N}}$ be the sequence of elements $x_1^{n_1}, x_2^{n_2},\cdots, x_h^{n_h}.$  We are claiming there exists a constant $C$, depending only on $\length(R/(I+(\underline{x})))$, such that for all $\underline{N}$
\[
\left|\frac{1}{p^{ed}}\length(R/I_e(I+(\underline{x})^{\underline{N}}))-\fsig(I+(\underline{x}^{\underline{N}}))\right|\leq \frac{CN}{p^{e}}.
\]
We will first show that there exists a constant $C$ such that for all $\underline{N}$ and $e\in \N$
\[
\frac{1}{p^{ed}}\length(R/I_e(I+(\underline{x})^{\underline{N}}))\leq \fsig(I+(\underline{x}^{\underline{N}}))+\frac{CN}{p^e}.
\]
The $R$-module $F_*R$ is finitely generated and torsion-free so there exists a short exact sequence
\[
0\to F_*R\xrightarrow{\psi} R^{\oplus \rank(F_*R)}\to T\to 0
\]
where $T$ is a finitely generated torsion $R$-module. By (4) of Lemma~\ref{Basic properties}
\[
\psi(F_*I_{e+1}(I+(\underline{x}^{\underline{N}})))\subseteq I_e(I+(\underline{x}^{\underline{N}}))^{\oplus \rank(F_*R)}
\]
and therefore there are right exact sequences
\[
F_*R/(I_{e+1}(I+(\underline{x}^{\underline{N}}))\xrightarrow{\psi } R^{\oplus \rank(F_*R)}/I_e(I+(\underline{x}^{\underline{N}}))^{\oplus\rank(F_*R)}\to T_{e}\to 0
\]
where $T_{e}$ is the homomorphic image of $T/I_e(I+(\underline{x}^{\underline{N}}))T$. Therefore
\begin{equation}\label{firstinequality}
\rank(F_*R)\length(R/I_e(I+(\underline{x}^{\underline{N}})))\leq \length(F_*R/I_{e+1}(I+(\underline{x}^{\underline{N}})))+\length(T/I_e(I+(\underline{x}^{\underline{N}}))T).
\end{equation}
Suppose that $c\in R$ is a nonzero element which annihilates $T$. Because $(I+(\underline{x}^{N}))^{[p^e]}\subseteq I_e(I+(\underline{x}^{\underline{N}}))$ there exists a surjective map \[
(R/(c, (I+(\underline{x}^{\underline{N}}))^{[p^e]}))^{\oplus \mu (T)}\to T/I_e(I+(\underline{x}^{\underline{N}}))T
\]
and we have that
\[
\length (T/I_e(I+(\underline{x})T))\leq \mu(T)\length(R/(c,(I+(\underline{x}^{N}))^{[p^e]})).
\]
It is well known there exists $C\in \mathbb{R}$, depending only on the ring $R$, such that
\[
\length(R/(c,(I+(\underline{x}^{N})))^{[p^e]})\leq Cp^{(e\dim(R)-1)}\length (R/(I+(\underline{x}^{\underline{N}}))),
\]
see \cite[Proposition~3.3]{PolstraLSC} for example. Because $R/I$ is Cohen-Macaulay we know that 
\[
\length(R/(I+(\underline{x}^{\underline{N}})))=\eh(\underline{x}^{\underline{N}};R/I)=N\eh(\underline{x};R/I)=N\length(R/(I+(\underline{x}))).
\]
If we divide the inequality in \ref{firstinequality} by $\rank(F_*R)p^{(e+1)d}$ we obtain that
\[
\frac{\length(R/I_{e}(I+(\underline{x}^{\underline{N}})))}{p^{ed}}\leq \frac{\length(R/I_{e+1}(I+(\underline{x}^{\underline{N}})))}{p^{(e+1)d}}+\frac{\mu(T)C\length(R/(I+\underline{x}))N}{p^e}
\]
for every $e\in \N$. The constant $\mu(T)C\length(R/(I+\underline{x}))$ has no dependence on $e$ or $N$, so we replace $C$ by this constant and utilize \cite[Lemma~3.5]{PolstraTucker} to obtain that
\[
\frac{\length(R/I_{e}(I+(\underline{x}^{\underline{N}})))}{p^{ed}}\leq \fsig(I+(\underline{x}^{N}))+\frac{2CN}{p^e}.
\]

Obtaining inequalities of the form 
\begin{equation}\label{inequality2}
\fsig(I+(\underline{x}^{N}))\leq \frac{\length(R/I_{e}(I+(\underline{x}^{\underline{N}})))}{p^{ed}}+\frac{CN}{p^e},
\end{equation}
 is almost identical to the above. Begin by examining a short exact sequence of the form
\[
0\to R^{\oplus \rank(F_*R)}\xrightarrow{\psi} F_*R\to T'\to 0
\]
where $T'$ is a torsion $R$-module. By (3) of Lemma~\ref{Basic properties} we have that $\psi(I_e(I+(\underline{x}^{\underline{N}}))^{[p]})\subseteq I_{e+1}(I+(\underline{x}^{\underline{N}}))F_*R$ and so there are right exact sequences
\[
\left(R/I_e(I+(\underline{x}^{\underline{N}}))^{[p]}\right)^{\oplus \rank(F_*R)}\xrightarrow{\psi} F_*R/I_{e+1}(I+(\underline{x}^{\underline{N}}))F_*R\to T'_e\to 0
\]
where $T'_e$ is the homomorphic image of $T'/I_{e+1}(I+(\underline{x}^{\underline{N}}))T'$. The reader is now encouraged to follow the techniques above and the techniques of \cite[Theorem~4.3]{PolstraTucker} to obtain inequalities as described in \ref{inequality2}.
\end{proof}

For the proof of the Theorem~\ref{Main Theorem Associativity type formula}  we recall the following standard result:
if $a_{m,n}$ is a bisequence such that
\begin{itemize}
    \item $\lim\limits_{m,n \to \infty} a_{m,n}$ exists, and
    \item $\lim\limits_{n \to \infty} a_{m,n}$ exists for all $m$,
\end{itemize}
then $\lim\limits_{m,n \to \infty} a_{m,n} = \lim\limits_{m \to \infty} \lim\limits_{n \to \infty} a_{m,n}$.

\begin{theorem}\label{Associativity formula for $F$-signature}
Let $(R, \mf m)$ be an $F$-finite  local ring of prime characteristic $p>0$ and of Krull dimension $d$. Suppose that $I\subseteq R$ is an ideal such that $R/I$ is Cohen-Macaulay of dimension $h$ and $\underline{x}=x_1,\ldots, x_h$ a parameter sequence on $R/I$. Then 
\[
\lim_{n_1,\ldots,n_h\to \infty}\frac{1}{n_1\cdots n_h}\fsig(I,(x_1^{n_1},\ldots, x_h^{n_h}))=\sum_{P} \eh(x_1,\ldots,x_h;R/P)\fsig(IR_P),
\]
where the sum is taken over all prime ideals $P\supseteq I$ such that $\dim(R/I)=\dim(R/P)$.
\end{theorem}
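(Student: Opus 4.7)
The plan is to express the desired limit as a double limit in $e$ and $\underline{N}=(n_1,\ldots,n_h)$ (with $N=n_1\cdots n_h$), identify the inner ($\underline{N}$) limit by Lech's formula, and conclude via classical associativity of Hilbert--Samuel multiplicity. Dividing Lemma~\ref{Technical lemma} by $N$ yields
\[
\left|\frac{\length(R/I_e(I+\underline{x}^{\underline{N}}))}{Np^{ed}}-\frac{\fsig(I+\underline{x}^{\underline{N}})}{N}\right|\leq\frac{C}{p^e}
\]
uniformly in $\underline{N}$. Setting $a_{e,\underline{N}}=\length(R/I_e(I+\underline{x}^{\underline{N}}))/(Np^{ed})$, this means $\lim_e a_{e,\underline{N}}=\fsig(I+\underline{x}^{\underline{N}})/N$ uniformly in $\underline{N}$. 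Once the inner limit $L_e:=\lim_{\underline{N}\to\infty}a_{e,\underline{N}}$ is shown to exist for each $e$ and $\lim_e L_e$ to exist, the bi-limit principle cited just before the theorem implies that $\lim_{\underline{N}\to\infty}\fsig(I+\underline{x}^{\underline{N}})/N$ exists and equals $\lim_e L_e$.

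The heart of the proof is the identification
\[
\lim_{\underline{N}\to\infty}\frac{\length(R/I_e(I+\underline{x}^{\underline{N}}))}{N}=p^{eh}\,\eh(\underline{x};R/I_e(I))
\]
for each fixed $e$. Since $I^{[p^e]}\subseteq I_e(I)$, one checks, via primary decomposition and parts (8), (11) of Lemma~\ref{Basic properties}, that $\sqrt{I_e(I)}=\sqrt{I}$, so $R/I_e(I)$ has dimension $h$ and $\underline{x}^{p^e}$ is a system of parameters on it. The containment $I_e(I)+(x_1^{n_1p^e},\ldots,x_h^{n_hp^e})\subseteq I_e(I+\underline{x}^{\underline{N}})$ combined with Lech's formula applied to $R/I_e(I)$ gives the upper bound $\length(R/I_e(I+\underline{x}^{\underline{N}}))/N\leq \length(R/(I_e(I)+\underline{x}^{\underline{N}p^e}))/N\to p^{eh}\eh(\underline{x};R/I_e(I))$; coupled with the monotonicity in Lemma~\ref{fsig monotone}(2), this forces $L_e$ to exist. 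The main obstacle is the matching lower bound. For this, I would use the decomposition $F^e_*R\cong R^{\oplus a_e(R)}\oplus M_e$ to write
$F^e_*I_e(I+\underline{x}^{\underline{N}})=(I+\underline{x}^{\underline{N}})^{\oplus a_e(R)}\oplus M'_{e,\underline{N}}$
where $M'_{e,\underline{N}}=\{m\in M_e:\varphi(m)\in I+\underline{x}^{\underline{N}}\ \forall\varphi\in\Hom_R(M_e,R)\}$, and similarly $F^e_*I_e(I)=I^{\oplus a_e(R)}\oplus M_e''$ with $M_e''=\{m\in M_e:\varphi(m)\in I\ \forall\varphi\}$. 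Comparing these decompositions expresses the length of the defect $I_e(I+\underline{x}^{\underline{N}})/(I_e(I)+\underline{x}^{\underline{N}p^e})$ as, up to a factor of $[k:k^{p^e}]$, the length of a submodule of $M_e/M_e''$; since this module is annihilated by $I$ and hence has dimension at most $h$, a Hilbert--Samuel estimate together with an Artin--Rees-type argument bounds the defect by $O(N^{h-1})=o(N)$, closing the sandwich.

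Granting the identification, apply the classical associativity formula for Hilbert--Samuel multiplicity to the module $R/I_e(I)$ with parameter sequence $\underline{x}$:
\[
\eh(\underline{x};R/I_e(I))=\sum_P\eh(\underline{x};R/P)\,\length_{R_P}(R_P/I_e(I)R_P),
\]
where $P$ ranges over $\Min(R/I)$ with $\dim R/P=h$, precisely the primes appearing in the statement. By part (6) of Lemma~\ref{Basic properties}, $R_P/I_e(I)R_P=R_P/I_e(IR_P)$, and since $\dim R_P=d-h$, the quantity $\length_{R_P}(R_P/I_e(IR_P))/p^{e(d-h)}$ converges to $\fsig(IR_P)$ as $e\to\infty$. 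Assembling the pieces,
\[
\lim_{\underline{N}\to\infty}\frac{\fsig(I+\underline{x}^{\underline{N}})}{N}=\lim_e L_e=\lim_e\frac{\eh(\underline{x};R/I_e(I))}{p^{e(d-h)}}=\sum_P\eh(\underline{x};R/P)\,\fsig(IR_P),
\]
as claimed. The key technical challenge, as flagged above, is the matching lower bound in the inner-limit identification, where one must polynomially bound, in $\underline{N}$, a submodule of the $R$-module $M_e/M_e''$.
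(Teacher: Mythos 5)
Your opening moves coincide with the paper's: the uniform estimate of Lemma~\ref{Technical lemma} to interchange the $e$-limit with the $\underline{N}$-limit, monotonicity from Lemma~\ref{fsig monotone} to get existence of the inner limit, and then a fixed-$e$ identification that is fed into the classical associativity formula and localized via Lemma~\ref{Basic properties}(6). Your target identity $\lim_{\underline{N}}\length(R/I_e(I+\underline{x}^{\underline{N}}))/N=p^{eh}\eh(\underline{x};R/I_e(I))$ is indeed equivalent to what the paper proves for fixed $e$, and your upper bound, coming from $I_e(I)+(\underline{x}^{\underline{N}})^{[p^e]}\subseteq I_e(I+\underline{x}^{\underline{N}})$ together with Lech's limit formula, is correct. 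But the matching lower bound is the entire mathematical content of the theorem at fixed $e$, and your proposal does not prove it.

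Concretely, your defect is (up to the factor $[k:k^{p^e}]$) the length of $M'_{e,\underline{N}}/(M_e''+\underline{x}^{\underline{N}}M_e)$, a subquotient, not a submodule, of $M_e/M_e''$; the observation that $M_e/M_e''$ is killed by $I$ and so has dimension at most $h$ only bounds this length by $\length\bigl(M_e/(M_e''+\underline{x}^{\underline{N}}M_e)\bigr)$, which is $\Theta(N)$ whenever the dimension is exactly $h$, so it gives nothing toward $o(N)$. What you actually need is the following: writing $\overline{M}=M_e/M_e''$ and embedding $\overline{M}\hookrightarrow F=(R/I)^{\oplus t}$ via generators of $\Hom_R(F^e_*R,R)$ restricted to $M_e$ (injective precisely because $M_e''$ is the common preimage of $I$), one must show $\length\bigl((\overline{M}\cap\underline{x}^{\underline{N}}F)/\underline{x}^{\underline{N}}\overline{M}\bigr)=\length\operatorname{Tor}_1^{R/I}\bigl(F/\overline{M},\,R/(I+(\underline{x}^{\underline{N}}))\bigr)=o(N)$ uniformly in the multi-index. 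Standard Artin--Rees applies to powers of a fixed ideal, not to the family $(x_1^{n_1},\ldots,x_h^{n_h})$, which is not even a multiplicative filtration, so the invoked ``Artin--Rees-type argument'' is not available off the shelf; the Koszul-homology/partial Euler characteristic estimate that would deliver the bound (using that $\underline{x}^{\underline{N}}$ is a regular sequence on the Cohen--Macaulay ring $R/I$) is itself a nontrivial Lech-type lemma that you would have to prove. This is exactly the difficulty the paper's argument is engineered to sidestep: it never compares $I_e(I+\underline{x}^{\underline{N}})$ with $I_e(I)+(\underline{x}^{\underline{N}})^{[p^e]}$ directly, but instead runs the colon identity $I_e(\fa:J)=I_e(\fa):J^{[p^e]}$ of Lemma~\ref{Basic properties}(7) through the bisequence $a_{n,m}$ in dimension one and then inducts on $h$ using Lech's associativity formula. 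As written, your proposal leaves its central step as an assertion.
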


\begin{proof}
Lemma~\ref{Technical lemma} allows us to swap limits and identify 
\begin{align*}
\lim_{n_1,\ldots,n_h\to \infty}\frac{1}{n_1\cdots n_h}&\fsig(I,(x_1^{n_1},\ldots, x_h^{n_h}))\\
&=\lim_{n_1,\ldots,n_h\to \infty}\lim_{e\to \infty}\frac{1}{n_1\cdots n_h p^{ed}} \lambda(R/I_e(I+(x_1^{n_1},\ldots, x_h^{n_h}))\\
&=\lim_{e\to \infty}\lim_{n_1,\ldots,n_h\to \infty}\frac{1}{n_1\cdots n_h p^{ed}} \lambda(R/I_e(I+(x_1^{n_1},\ldots, x_h^{n_h})).
\end{align*}
Furthermore, by Lemma~\ref{fsig monotone}
\begin{align*}
\lim_{n_1,\ldots,n_h\to \infty}\frac{\length (R/I_e(I +(x_1^{n_1},\ldots, x_h^{n_h})))}{n_1\cdots n_h} 
&= \sup_{n_1,\ldots,n_h}\frac{\length (R/I_e(I +(x_1^{n_1},\ldots, x_h^{n_h})))}{n_1\cdots n_h}
\\&= \sup_n \frac{\length (R/I_e(I +(x_1^{n},\ldots, x_h^{n})))}{n^h}.
\end{align*}
We prove the theorem by induction on $h$. Let us start with the 
case of $h = 1$.

To prove the claim, let us introduce an auxiliary bisequence that will link the two sides of the formula together. 
\begin{claim}
For each pair of natural numbers $n,m\in \mathbb{N}$ let 
\[
a_{n, m}=\length \left( R/(I_e(I+(x^{n+m}))+(x^{np^e})) \right ).
\]
Then the bisequence $a_{n,m}$ satisfies the following properties:
\begin{enumerate}
\item $a_{n, 0} = \length (R/I_e(I +(x^{n})))$;
\item $\lim\limits_{m\to \infty} a_{n, m} = \length (R/(I_e(I) + (x^{np^e})))$;
\item $a_{n, m} = a_{n + m, 0} - a_{m,0}$.
\end{enumerate}
\end{claim}
\begin{proof}
The first two properties are immediate from the definition.

For the third formula we first recall that 
if $J$ is an ideal and $x \notin J$ then 
$\length (R/(J,x)) = \length (R/J) - \length (R/J:x)$.
Applying this to $J = I_e(I + (x^{n+m}))$ and $x^{np^e}$
we obtain by (7) of Lemma~\ref{Basic properties} that
\begin{align*}
a_{n,m} &= \length (R/I_e(I + (x^{n+m}))) - 
\length (R/I_e(I + (x^{n+m})):x^{np^e})\\
&= a_{n+m, 0} - \length (R/I_e((I + (x^{n+m})):x^n))\\
&= a_{n+m, 0} - a_{m, 0}.
\end{align*}
\end{proof}

Recall that for any bisequence $\sup_n \sup_m a_{n,m} = \sup_{n,m} a_{n,m} = \sup_m \sup_n a_{n,m}$.
By definition, the sequence $a_{n,m}$ is increasing in $m$, so 
by the claim
\[
\sup_{n} \sup_m \frac{a_{n,m}}{n} = 
\sup_n \frac 1n \length (R/I_e(I +(x^{n})))
= \sum_{P} \eh(x, R/P) \length (R_P/I_e(I)R_P),
\]
because $x$ is a regular element modulo $I_e(I)$ by (9) of
Lemma~\ref{Basic properties}.
On the other hand, by Lemma~\ref{fsig monotone} 
$a_{n, 0}/n$ is an increasing function in $n\in \NN$, so 
the claim also shows that 
\[
\sup_{n} \frac{a_{n,m}}{n}
= \sup_{n} \frac{a_{n+m, 0} - a_{m, 0}}{n}
= \lim_{n\to \infty} \frac{a_{n, 0}}{n}.
\]
Thus
\begin{align*}
\sum_{P} \eh(x, R/P) \length (R_P/I_e(I)R_P)
= \sup_{n} \sup_m \frac{a_{n,m}}{n}
= \sup_m \lim_{n\to \infty} \frac{a_{n, 0}}{n}
= \lim_{n\to \infty} \frac{\length (R/I_e(I +(x^{n})))}{n},
\end{align*}
which proves the theorem in the case $h = 1$
after passing to the limit as $e \to \infty$. 

For $h\geq 2$ we may first consider the ideal 
$I' = I + (x_1^n, \ldots, x_{h-1}^n)$ and 
get that
\[
\lim_{m \to \infty}
\frac{\length (R/I_e(I' + (x_h^m)))}{m}
= \sum_Q \eh(x_h; R/Q) \length (R_Q/I_e(I')R_Q),
\]
where $Q$ varies through the prime ideals $P$ containing  
$I_e(I')$ such that $\dim R_Q/PR_Q = \dim R_Q/I_e(I')R_Q$.
By induction,  
\[
\lim_{n \to \infty}
\frac{\length (R_Q/I_e(I')R_Q)}{n^{h-1}}
= \sum_P \eh(x_1, \ldots, x_{h-1}; R_Q/PR_Q) \length (R_P/I_e(I)R_P),
\]
where $P$ varies through the prime ideals $P$ containing $I_e(I)$
such that $\dim R_Q/PR_Q = \dim R_Q/I_e(I)R_Q$.
Thus 
\begin{align*}
\lim_{n \to \infty}
&\frac{\length (R/I_e(I +(x_1^{n},\ldots, x_h^{n})))}{n^h}\\
&= \lim_{n \to \infty} \lim_{m \to \infty} 
\frac{\length (R/I_e(I' + (x_h^m)))}{mn^{h-1}}\\
&= \sum_Q \sum_P \eh(x_h; R/Q) \eh(x_1, \ldots, x_{h-1}; R_Q/PR_Q) \length (R_P/I_e(I)R_P).
\end{align*}
The theorem follows by changing the order of summation 
and using the associativity formula for parameter ideals (\cite[Theorem~1]{Lech}), 
see the proof of \cite[Theorem~4.9]{equi}.
\end{proof}

\bibliographystyle{alpha}
\bibliography{deg}

\end{document}